\renewcommand{\Sh}{\mathrm{Sh}}
\renewcommand{\Mod}{\mathrm{Mod}}
\newcommand{\musupp}{\mu\mathrm{supp}}
\newcommand{\Loc}{\mathrm{Loc}}
\title{On the generic existence of WKB spectral networks/Stokes graphs}
\date{}
\author{Tatsuki Kuwagaki}
\begin{document}

\maketitle
\begin{abstract}
We prove the generic existence of spectral networks for a large class of spectral data. 
\end{abstract}

\section{Introduction}
Let $C$ be a compact Riemann surface and $\Phi$ be a meromorphic quadratic differential on $C$. For $\theta\in S^1$, the equation $\Image \sqrt{\Phi}=0$ gives a codimension 1 foliation on $C$. The set of leaves passing through the zero set of $\Phi$ is called, the Stokes graph of $\Phi$ in the literature of exact WKB analysis. It is a fundamental observation of Voros~\cite{Voros}, and later proved by Koike--Sch\"afke~\cite{KoikeShaefke} (see also \cite{Takei, Nikolaev}) that, for a large class of $\Phi$, we can analytically lift the all order formal WKB solutions of a deformation quantization of the spectral curve of $\Phi$ on the complement of the Stokes graph for generic $\theta$. By assigning connection matrices on each curve on the graph, we can completely package the solution to the connection problem of the exact WKB solutions.

Independently, the same structure was discovered by Gaiotto--Moore--Neitzke~\cite{GMNWKB} in the study of a certain class of $\cN=2, d=4$ quantum field theories called class $\cS$ theories. In their frame work, the space of meromorphic quadratic differentials of a given pole type is interpreted as the Coulomb branch of the theory. Fix $\theta\in S^1$ and a point of the Coulomb branch determines a ``phase" of the theory, and each curve passing through $z\in C$ in the Stokes graph is a BPS-state of the theory in the presence of a defect on $z$. The reason why the Stokes graph is important in physics is now evident: Knowing the contents of the BPS-spectrum is a first step to understand the theory. Similar to the case of exact WKB analysis, one can assign BPS multiplicities on the curves of the diagram. In their terminology, Stokes graph is called (WKB) 
{\em spectral network}.

\begin{remark}
    In the body of the paper, we use the term {\em Stokes graph} rather than WKB spectral network.
\end{remark}

To generalize exact WKB analysis to ``higher order differentials" is a long desired story, but there are several obstacles to establish such a theory. The first obstacle is that we do not know how to define Stokes graphs for the higher order case. It is known that a naive generalization does not work, as observed by Berk--Nevins--Roberts~\cite{BNR}. The next obstacle is the resummation problem, but it is even more harder: We cannot even formulate a conjecture without knowing the Stokes graph.

One of the subjects of this paper is to provide a generic existence theorem of Stokes graph/spectral network for a large class of pole data. 

Our approach is based on Gaiotto--Moore--Neitzke's work~\cite{GMNspec}. In their work, they proposed an inductive construction of spectral networks using mass filtration and claimed that their construction generically work. Although their ideas seem to be roughly correct, we believe that to transform it to a mathematically rigorous statement, several works have to be done. Our work here is to provide mathematically rigorous setup/notions/proofs to make their construction actually work.

Now we would like to state our results more precisely. Let $D$ be a finite subset of $C$ and $K$ be a fixed positive number. We take a point $\Phi$ of the meromorphic Hitchin base $\bigoplus_{i=1}^KH^0(C, \cK_C^{\otimes i}(*D))$ where $\cK_C$ is the sheaf of meromorphic sections of the canonical sheaf with poles in $D$. In this work, we assume $\Phi$ is \emph{strongly GMN} (Definition~\ref{def:GMN}) which is a higher-order generalization of (a little bit stronger version of) GMN condition appeared in the work of Bridgeland--Smith~\cite{BridgelandSmith}.

For a point $\Phi$, one can associate a complex 1-dimensional submanifold called the {\em spectral curve} $L$ of $\Phi$ in the cotangent bundle $T^*C$. The restriction of the projection $\pi\colon T^*C\rightarrow C$ to $L$ gives a branched $K$-fold covering map. We define the set of turning points to be the branching values of the covering map, which generalizes the 2nd order case. 

Around a point in $C$ outside the branching values, take a pair of sheets of $\pi|_L$. Then the difference between the restriction of the holomorphic Liouville form $\lambda$ to the sheets gives a holomorphic 1-form on $L$. Then the reality condition of the 1-form times $e^{2\pi i\theta}$ gives a local foliation of $C$. A leaf of the foliation is called a $\theta$-preStokes curve.

Roughly speaking, a $\theta$-Stokes graph is a set of $\theta$-preStokes curves first emanating from the set of turning points, and then evolving again and again from the collisions of the curves. We say a $\theta$-Stokes graph is unobstructed if it does not contain any 4d BPS states (Definition~\ref{def:unobs}), which is a tree whose edges are $\theta$-preStokes curves and the vertices are turning points.
\begin{theorem}
Suppose $\Phi$ is strongly GMN. There exists a dense set of $S^1$ such that unobstructed $\theta$-Stokes graph exists for any $\theta$ in the subset.
\end{theorem}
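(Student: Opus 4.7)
The plan is to follow the inductive, mass-filtered construction of Gaiotto--Moore--Neitzke and to show that at each finite mass level only countably many $\theta \in S^1$ are obstructed, so that the complement is automatically dense. Assign to each segment $\gamma$ of a $\theta$-preStokes curve its central charge $Z_{ij}(\gamma) = \int_\gamma (\lambda_i - \lambda_j)$; by the defining reality condition, $Z_{ij}(\gamma)$ has constant argument determined by $\theta$ along the segment, and its modulus is the mass. A 4d BPS configuration is a tree whose edges carry such central charges with a consistent sign convention, so its existence forces a specific nonzero combination of periods to have argument determined by $\theta$, a real codimension-one constraint.

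First I would set up the induction on mass. The finitely many primary preStokes curves emerging from each turning point are well-defined by local analysis at a simple branching, and the strongly GMN hypothesis controls the asymptotics towards the poles in $D$ and guarantees that the turning points are disjoint. Define the truncated graph $\mathcal{G}^{<M}_\theta$ by iteratively forming descendant segments at collisions of existing curves, cut off whenever the accumulated mass exceeds $M$. A key technical step is to verify that at each collision the descendant segment begins with strictly greater mass than either parent at that point, so that the induction within $\mathcal{G}^{<M}_\theta$ is well-founded and terminates after finitely many events.

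Next I would prove the core genericity lemma: for every $M$, the set
\[
B_M = \{\theta \in S^1 \mid \mathcal{G}^{<M}_\theta \text{ contains a 4d BPS tree}\}
\]
is at most countable. Because $\mathcal{G}^{<M}_\theta$ has only finitely many combinatorial types as $\theta$ ranges over a connected component of $S^1 \setminus B_M$, there are at most countably many candidate BPS trees over the whole circle; each one imposes a single real alignment equation on $\theta$ and is therefore satisfied at only finitely many phases. Taking a sequence $M_n \to \infty$, the union $B = \bigcup_n B_{M_n}$ is still countable, and for any $\theta$ in its dense complement the full $\theta$-Stokes graph $\bigcup_n \mathcal{G}^{<M_n}_\theta$ is unobstructed, as desired.

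I expect the main obstacle to be the combinatorial finiteness of $\mathcal{G}^{<M}_\theta$, specifically ruling out infinite cascades of collisions with uniformly bounded total mass. This is a genuinely new phenomenon in the higher-order setting; it does not arise for quadratic differentials and is morally the source of the Berk--Nevins--Roberts pathology. The strongly GMN condition together with the microlocal and sheaf-theoretic framework set up earlier in the paper should supply exactly the control needed to prevent such accumulations, to show that $\mathcal{G}^{<M}_\theta$ is piecewise constant in $\theta$ off the countable wall set, and to keep the bookkeeping of combinatorial types small enough for the countability count to go through.
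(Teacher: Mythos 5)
Your proposal correctly identifies the high-level strategy -- mass-filtered inductive construction following Gaiotto--Moore--Neitzke, with density of unobstructed angles coming from perturbation genericity -- but there is a genuine gap in the step you yourself flag as ``the main obstacle,'' and the gap is not of the kind you suggest.

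The crucial issue is combinatorial finiteness: showing that the inductive construction, cut off at a fixed mass level $M$, terminates after finitely many collision events. You write that ``the microlocal and sheaf-theoretic framework set up earlier in the paper should supply exactly the control needed,'' but in fact that framework plays no role here. The paper handles this purely metrically, by establishing a \emph{uniform positive lower bound on the mass increment} gained at each collision. Concretely, one fixes, uniformly over a small interval of $\theta$, compact neighborhoods $U_v$ and $V_v$ of each turning point $v$, then defines a single constant $w_{\min}>0$ (built from integrals of $\lambda_i-\lambda_j$ over these neighborhoods) and proves by induction that after $n$ rounds of scattering every unscattered collision has mass at least $(n+1)w_{\min}$. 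This is what forces the cutoff $E_{\frakS(n)}\to\infty$ and what plays the role of Gromov compactness. Your proposal asserts that ``at each collision the descendant segment begins with strictly greater mass than either parent'' -- but ``strictly greater'' is not enough: one needs the increments to be \emph{bounded away from zero}, and this requires the geometric setup (neighborhoods of turning points, associated quadratic differential to control trajectory behavior, strongly GMN to push mass to infinity toward the poles in $D$). Without this, an infinite cascade with summable mass increments is not ruled out.

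A secondary difference: your genericity argument is structurally different from the paper's and not fully justified as stated. You claim the obstructed set $B_M$ is countable because each 4d BPS tree imposes ``a single real alignment equation on $\theta$, therefore satisfied at only finitely many phases.'' This is plausible (the central charge of a closed tree is a fixed period of the spectral curve, so the alignment is a single angle condition), but you would still need to show that, off a measure-zero set, the graph $\mathcal{G}^{<M}_\theta$ is piecewise constant in combinatorial type -- and that is circular the way you have set it up, since $B_M$ appears in its own definition. The paper instead runs a softer argument: it produces a \emph{nested sequence of nonempty relatively compact open intervals} $I_1\supset I_2\supset\cdots$ on which the first $n$ stages are unobstructed modulo $E_n$, using a ``moving lemma'' (perturbing $\theta$ slightly moves every Stokes tree uniformly to one side, so a cyclic collision can be broken). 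The intersection $\bigcap_n I_n$ is nonempty, and since $I_1$ can be placed arbitrarily close to any chosen $\theta_0$, density follows. This gives a weaker conclusion than countability of the obstructed set, but it avoids the circularity and requires only the mass estimate plus the moving lemma. If you want to pursue the countability route, you would need to run the paper's induction first to get piecewise-constancy of the combinatorics, and only then apply the alignment-equation argument on each piece.
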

We can strengthen this theorem to a statement with compatible wall-crossing data as follows:  Wall-crossing data is a set of pairs of a preStokes curve and wall-crossing matrix (or connection matrix in the context of exact WKB analysis). Associated to it, we have wall-crossing matrix. The compatibility means the vanishing of undesired monodromies.

As Gaiotto--Moore--Neitzke observed, Stokes graphs/spectral networks are filtered by BPS mass. An initial wall-crossing data is a wall-crossing data defined at the level of $\text{mass}=0$.
Here is our another main theorem.
\begin{theorem}
Suppose $\Phi$ is strongly GMN. Let $I$ be an open subset of $S^1$. Give an initial wall-crossing data. Then there exists a dense subset of $I$ such that a Stokes graph with compatible wall-crossing data exists for any $\theta\in I$.
\end{theorem}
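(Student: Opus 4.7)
The plan is to build the Stokes graph with compatible wall-crossing data inductively along the BPS mass filtration of Gaiotto--Moore--Neitzke, starting from the prescribed initial data at mass zero. For any cutoff $m>0$, the portion of the graph of mass $\le m$ consists of finitely many preStokes curves together with wall-crossing matrices attached to each; the next layer is built from collisions of previously constructed curves whose masses sum to at most $m+\epsilon$, with new preStokes curves emitted according to the GMN prescription and wall-crossing matrices assigned by the commutator/detour formula applied to the incoming matrices.

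First I would fix an unbounded increasing sequence $0=m_0<m_1<m_2<\cdots$ and, for each $n$, let $U_n\subset I$ be the set of $\theta$ for which the $\theta$-Stokes graph truncated at mass $m_n$ exists, is unobstructed, has only transverse collisions, and carries wall-crossing matrices (determined by propagation of the initial data) whose composition around each bounded region cut out by curves of mass $\le m_n$ is trivial. Openness of $U_n$ is routine: only finitely many curves of mass $\le m_n$ are involved, they vary continuously with $\theta$, transversality is an open condition, and the matrix identity encoding compatibility is preserved under small perturbations as long as the combinatorial type of the truncated graph does not change.

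The key step is to prove density of each $U_n$ in $I$. The failure of $\theta\in U_n$ to lie in $U_{n+1}$ is caused by one of finitely many events at mass level in $(m_n,m_{n+1}]$: the appearance of a 4d BPS tree, a non-transverse collision, or a nontrivial monodromy in a newly formed region. Each is a real codimension-one condition on $\theta$, defined by an equation of the form $\mathrm{Im}(e^{i\theta}\int_\gamma\lambda)=0$ for a cycle $\gamma$ on $L$ assembled from pieces of the mass-$\le m_{n+1}$ graph, or by the vanishing of a commutator of wall-crossing matrices whose entries are controlled by such period integrals. The \emph{strongly GMN} hypothesis ensures that the relevant periods do not vanish identically in $\theta$ along any arc; combined with the first theorem, which already furnishes unobstructedness on a dense subset, this gives that $U_{n+1}$ is open and dense in $U_n$, hence in $I$.

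The theorem then follows from the Baire category theorem applied to the open interval $I$: the intersection $\bigcap_n U_n$ is dense, and on it the full Stokes graph with compatible wall-crossing data exists. The main obstacle will be making the density argument precise, namely writing the wall-crossing compatibility condition as an explicit transcendental condition on $\theta$ in terms of the periods of $\lambda$ along cycles built from the inductively constructed graph, and verifying that the strongly GMN condition precludes identical vanishing of these conditions on any subarc. This requires careful bookkeeping of how wall-crossing matrices compose along trees of collisions and a stability statement for the graph combinatorics under $\theta$-perturbations, both of which should be accessible using the finiteness of mass-$\le m_n$ data guaranteed by the GMN filtration.
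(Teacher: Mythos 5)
Your proposal uses the same mass-filtration induction, but the high-level genericity argument (openness and density of each $U_n$, then Baire) and the treatment of the wall-crossing data both diverge from the paper in ways that leave real gaps.

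First, the paper does not argue by Baire category. Instead, in Lemma~\ref{lemma:keylemma} it produces a \emph{single nested sequence} of nonempty open intervals $I_1 \supset I_2 \supset \cdots$ with each $I_n$ relatively compact in $I_{n-1}$, on which $\frakS(n)$ is scatterable and unobstructed modulo $E_n$, and it concludes by the finite intersection property that $\bigcap_n I_n \neq \emptyset$. To run this, one has to work with \emph{families} of Stokes trees (``deformable Stokes trees'') over intervals, not with pointwise sets $U_n \subset I$. Your openness claim for $U_n$ is in fact delicate: the combinatorial type of the truncated graph changes when collisions form or collapse, so the set where it is ``the same'' is not naturally open, and a careful family framework is needed to make a perturbation argument precise (this is what the moving lemma and Lemma~\ref{lem:deform} do).

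Second, you treat the triviality of the wall-crossing monodromy around each region as one more codimension-one condition on $\theta$ to be generically avoided. In the paper this is not a constraint at all: Theorem~\ref{thm:constructionofWallcrossing} shows that once the Stokes graph is unobstructed, there is a \emph{unique} consistent wall-crossing data extending the initial data, constructed inductively by assigning the factor $(-1)^{s+1}\prod_{j}\alpha_{\cT_{i_j}}$ to each scattering so as to cancel the local monodromy. The only genuine obstruction is the presence of cyclic ordered collisions (closed Stokes trees, i.e.\ 4d BPS states), and those are what the perturbation of $\theta$ removes. Your proposal would redundantly impose a non-vanishing condition that is already automatically satisfied.

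Third, and most seriously, you assume the finiteness of the mass-$\leq m_n$ data (``the portion of the graph of mass $\le m$ consists of finitely many preStokes curves'') as a given consequence of the GMN filtration. This is precisely what the paper has to prove, and it is where most of the technical work lies. The inductive construction could in principle produce new collisions whose mass thresholds $E_{\frakS(n)}$ accumulate below some finite bound, in which case the construction never reaches masses above that bound. The paper rules this out by the uniform lower bound $E_{\frakS(n)} \geq (n+1)w_{\min}$, where $w_{\min}$ is a positive constant built from neighborhoods $U_v, V_v$ of the turning points, using the associated quadratic differential to control trajectory behavior. Without this Gromov-compactness-style mass estimate, the induction does not terminate and the theorem does not follow.
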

Our proof is based on Gaiotto--Moore--Neitzke's argument, which is very close to the construction of scattering diagrams~\cite{KontsevichSoiblemanAffine,GrossSiebert}: We first start with the preStokes curves emanating from the branching values of $L$. Then, we possibly have anomalous monodromy around the collisions. Then we add new Stokes curves with wall-crossing factors to cancel the anomalous monodromies. Then the new Stokes curves can have collisions and produce anomalous monodromies again. Then we repeat the procedures.

This prescription is very convincing, but there are potential problems: (1) Possibly there are situations where we cannot cancel anomalous monodromies, (2) the set of collisions/preStokes curves in the network is possibly non-discrete/dense for which we cannot define local monodromies.

We can observe that (1) only happens when there are 4d BPS states which does not happen for generic $\theta$. For (2), Gaiotto--Moore--Neitzke used the mass filtration to draw diagrams.

To make this procedure mathematically rigorous, our key machineries/observations/proofs are the following:
\begin{enumerate}
    \item Associated quadratic differential: To control the behavior of preStokes curves, we associate a quadratic differential on a covering of $C$. Then we can use the classical theory of quadratic differentials~\cite{Strebel} to control preStokes curves.
    \item Novikov ring: To describe local monodromies in a way compatible with the mass filtration, it is better to consider them as defined over the Novikov ring. This is parallel to the use of the formal power series ring in the theory of scattering diagram.
    \item Stoke trees: To perturb $\theta$ to collapse 4d BPS states, we have to treat a family of Stokes graphs/spectral networks, which is not well-behaved. We instead treat it as a union of Stokes trees. This interpretation is motivated by \cite{GMNspec} and a folklore conjectural interpretation of spectral networks in terms of holomorphic disks bounded by the spectral curve. For the 2nd order case, see Nho~\cite{nho2024family} for a related result. The expected interpretation in the present setup will be shown in \cite{IKO}.
    \item Gromov compactness: In our construction, we take mass cut-off. Step by step, we can take our mass cut-off greater than the previous cut-off. We have to prove the cut-off eventually goes to $+\infty$. One can prove this by careful estimates of mass.
\end{enumerate}
With these machineries, we can prove the existence of spectral networks.

\begin{remark}
    A part of the argument of the proof of this theorem has already appeared in the author's previous work~\cite[v3]{WKBkuw}. However a result provided in the paper is proved under very strong assumptions, which has been never checked for a large class of examples. In the referee process of the paper, the anonymous referee suggested the author to remove the section from the paper. The result of a large enhancement of the section is this paper.
\end{remark}

\begin{remark}
    From the side of exact WKB analysis, the theory of virtual turning points has been developed~\cite{Virtual}. Based on this theory, Honda~\cite{Honda} proposed an approach to a general construction of Stokes graph under certain conditions. Although some of the ideas here are parallel to his work, the statement of his theorem is very different from ours, in particular, it does not prove any generic existence. See the original article for more information.
\end{remark}

Now we give several applications of our result.

The first one is an exact formulation of exact WKB conjecture, which will be provided in Section~\ref{WKBconjecture}.

The second one is the Gromov compactness theorem for 4d BPS states. In this paper, a 4d BPS-state is a Stokes tree ended in turning points. 
\begin{theorem}
For any $\theta\in S^1$, any strongly GMN Lagrangian $L$, and any $M\geq 0$, the number of the BPS states with mass less than $M$ is finite.
\end{theorem}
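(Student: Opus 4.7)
The plan is to reduce the statement to the classical finiteness of saddle connections of bounded length for a meromorphic quadratic differential, via the associated quadratic differential construction listed as the first key tool in the introduction.

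First I would reformulate the problem. By definition, a 4d BPS state of mass $\leq M$ is a tree whose vertices are turning points, whose edges are $\theta$-preStokes curves with both endpoints at turning points, and whose total mass is at most $M$. Using the associated quadratic differential on the appropriate cover of $C$, each such edge becomes a saddle connection in direction $\theta$ --- a horizontal trajectory joining two zeros --- whose Strebel length is controlled by the mass of the edge. The strongly GMN hypothesis ensures that the set of turning points is finite.

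The core step is then the following finiteness claim: for every $M' \geq 0$, the set of $\theta$-preStokes saddle connections of mass $\leq M'$ is finite. Granting this, the theorem follows easily. One obtains a positive lower bound $m_\ast(M) > 0$ on the individual edge masses appearing in a BPS tree of mass $\leq M$ (otherwise one would produce an infinite family of such saddle connections of mass $\leq M$), so every such tree has at most $M/m_\ast(M)$ edges. Combined with the finiteness of the set of turning points and of the edge choices between any two turning points, only finitely many BPS trees arise.

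The main obstacle is the core finiteness claim. On a compact surface carrying a holomorphic quadratic differential, this is classical (Strebel, Masur). In the present situation the associated quadratic differential is defined on a non-compact cover with poles, so accumulation near the poles must be ruled out separately. I would use the strongly GMN condition to pin down the leading-order pole behavior, showing that no saddle connection can terminate at or accumulate near a pole, and combine this with the Gromov-type mass estimates that constitute the fourth key tool to prevent accumulation in the compact part: any hypothetical infinite family of saddle connections of mass $\leq M'$ subconverges, after passing to a subsequence, to a broken configuration of strictly shorter saddle connections meeting at turning points, and iterating this degeneration inside the finite set of turning points contradicts the positive lower bound on edge masses extracted at the previous stage. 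Once discreteness of the set of bounded-mass saddle connections is in hand, the routine combinatorial count described above finishes the proof.
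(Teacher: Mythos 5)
Your reduction is based on a misreading of what a 4d BPS state (closed Stokes tree) is in this paper. You write that it is ``a tree whose vertices are turning points, whose edges are $\theta$-preStokes curves with both endpoints at turning points.'' In the paper's actual definition, only the \emph{exterior} vertices are required to lie at branching values; the \emph{interior} vertices are required to lie in $C\setminus(D\cup B(\Phi))$, i.e.\ they are generic points where preStokes curves of compatible types collide, not zeros of the (associated) quadratic differential. Consequently an edge of a closed Stokes tree is in general \emph{not} a saddle connection of $\widetilde\Phi$: at least one of its endpoints is a generic collision point. So the set of candidate edges is not parametrized by saddle connections between a finite set of zeros, and the combinatorial count you perform at the end (``finitely many turning points $\times$ finitely many edge choices between any two turning points'') does not bound the number of trees. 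The whole point of the higher-order theory, and the reason the paper develops the inductive scattering construction, is that these collision vertices can be anywhere on the surface and must be controlled by a separate mechanism. There is also a circularity in your sketch of the core finiteness claim: the ``positive lower bound on edge masses extracted at the previous stage'' is a consequence of the claim you are trying to prove, so invoking it inside the degeneration argument is not a valid induction as stated.

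The paper's proof takes a different route. It re-runs the inductive construction of the Stokes graph from the proof of the main theorem, adding scattered preStokes curves at each stage, and uses the already-established quantitative estimate that the mass cutoff satisfies $E_{\mathfrak{S}(n)} \geq (n+1)w_{\min}$ for a fixed $w_{\min}>0$ independent of $n$. Hence for any $M$ there is a finite $n$ with $E_{\mathfrak{S}(n)}>M$, so every closed Stokes tree of mass $<M$ already appears in $\mathfrak{S}(n)$, and the earlier Claim on finiteness of bounded-mass collisions gives that $\mathfrak{S}(n)$ contains only finitely many closed trees. Your plan does correctly identify the two key tools (associated quadratic differential and mass estimates), but it deploys them as if the finiteness were a classical saddle-connection count; the paper instead builds the finite-mass part of the graph explicitly and bounds the number of scattering steps.
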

The proof is a minor modification of the proof of the main theorem of this paper. Conjecturally, this statement is a Morse-theoretic counterpart of the Gromov compactness theorem of holomorphic disks bounded by the spectral curve.

The third application is a construction of sheaf quantization of the spectral curve as conducted in \cite{WKBkuw} in the 2nd order case. Again, let $L$ be a strongly GMN Lagrangian. We specify a brane structure of $L$. For $\theta \in S^1$, we set $L_\theta:=e^{2\pi i\theta}\cdot L$, which also carries an induced brane structure.

If $L_\theta$ is unobstructed, it defines an object of (non-curved) Fukaya category of $T^*C$. By the Solomon--Verbitsky theorem~\cite{SolomonVerbitsky}, the set of unobstructed $\theta$'s are dense in $S^1$. 

On the other hand, there is a conjectural sheaf-theoretic model of nonexact Fukaya category of $T^*M$, called the category of sheaf quantizations~\cite{Tam, WKBkuw, IK, KPS}. The following theorem is a sheaf-theoretic counterpart of a version of the Solomon--Verbitsky theorem.
\begin{theorem}
    There exists a dense set $D$ of $S^1$ such that there exists a sheaf quantization of $L_\theta$ for any $\theta\in D$.
\end{theorem}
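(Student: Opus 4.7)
The plan is to combine the second main existence theorem with a sheaf-theoretic gluing construction on the complement of the Stokes graph, along the lines of the second-order case treated in \cite{WKBkuw}. The brane structure of $L$ will supply the input data, and compatibility of the wall-crossing data will translate directly into the gluing cocycle for a sheaf quantization over the Novikov ring.

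First I would use the chosen brane structure on $L$ (grading plus spin) to manufacture an initial wall-crossing data at mass zero. Near each turning point $p$, the local model of a sheaf quantization is essentially determined by the braiding of the two colliding sheets of $\pi|_L$, and this braiding together with the grading and spin data specifies a canonical wall-crossing matrix on each pre-Stokes curve emanating from $p$. At mass zero this is a finite combinatorial assignment, and it serves as the initial data for the inductive construction.

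Next I would apply the second main theorem with $I = S^1$ and this initial data. It yields a dense subset $D \subset S^1$ for which a $\theta$-Stokes graph with compatible wall-crossing data exists. For each such $\theta$, the complement of the graph in $C$ decomposes into open chambers. On each chamber I take the standard free rank-$K$ Novikov-module sheaf, i.e.\ the chamber-local sheaf quantization of the $K$ sheets of $L_\theta$ read off from the chosen branch ordering. On each pre-Stokes curve I glue the two adjacent chamber models via the assigned wall-crossing matrix. The compatibility condition, that is the vanishing of undesired local monodromies around interior collisions and around generic points of $C$, is exactly the cocycle condition needed for the local data to glue into a single global object on $C$.

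The main obstacle I expect is not the gluing itself but verifying that the resulting sheaf genuinely is a sheaf quantization of $L_\theta$, i.e.\ has microsupport equal to $L_\theta$ with the correct brane-theoretic decoration. In the second-order case this is carried out in \cite{WKBkuw} by a direct microlocal computation on each chamber and each wall. The higher-order generalization should follow the same pattern: on a chamber the microsupport is read off from the free model, and across a Stokes curve the wall-crossing matrix realizes precisely the microlocal jump predicted by the pair of sheets of $L_\theta$ whose Liouville-form difference is real along that curve. The remaining technical point, convergence of the Novikov-weighted gluing at every point of $C$, is ensured by the Gromov-type mass estimate that already underlies the proof of the main theorem.
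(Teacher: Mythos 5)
Your proposal follows essentially the same route as the paper: brane structure produces the canonical initial wall-crossing data, the existence theorem with compatible wall-crossing data supplies a dense set of angles, and the sheaf quantization is then glued from chamber-local free Novikov-module pieces via the wall-crossing automorphisms, with convergence handled by the mass filtration and inverse limit. The only cosmetic difference is that the paper assigns the initial wall-crossing factor to be simply $1$ on each initial Stokes curve and lets the spin structure enter through the auxiliary local system $\cL^\sigma$ on $S^*L$ inside the wall-crossing matrix, rather than folding the braiding/spin data directly into the factor itself, but this does not change the argument.
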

The proof goes as follows: A brane structure gives an initial spectral network. Then the main theorem gives a spectral network. Now, as in \cite{WKBkuw}, one can glue up a sheaf quantization by using wall-crossing factors. From sheaf quantizations, one can also construct a local system over the base, which is nonabelianization in the sense of \cite{GMNspec} in our setup.

\subsection*{Acknowledgment}
I would like to thank Akishi Ikeda, Hiroshi Iritani, Tsukasa Ishibashi, Kohei Iwaki, and Hiroshi Ohta for discussions on several topics related to the subject of this paper. I also would like to thank the members of GHKK reading seminar (Tsukasa Ishibashi, Shunsuke Kano, Yuma Mizuno, Hironori Oya) where I learned the notion of scattering diagram. This work is supported by JSPS KAKENHI  Grant Numbers JP22K13912 and JP20H01794.

\section{Setup}
In this section, we introduce our main objects. There are two ways to describe it by using differentials and by using Lagrangians. These two descriptions are equivalent and both useful. In the following section, we will use these two descriptions interchangeably.
\subsection{Differential language}
Let $C$ be a compact Riemann surface and $D$ be a nonempty finite subset on $C$. Let $\cK_C(*D)$ be the sheaf of meromorphic differentials with possible poles in $D$.
\begin{definition}
Fix a positive integer $n$. A ($K$-)spectral data is an element
\begin{equation}
    \Phi=\bigoplus_{i=1}^K\Phi_i\in \bigoplus_{i=1}^K H^0(C, \cK_C^{\otimes i}(*D)).
\end{equation}
\end{definition}

Let $\Phi$ be a spectral data. 
Let $z$ be a local coordinate of $C$. Then $\Phi$ can be locally written as
\begin{equation}
    \Phi=\sum_{i=1}^K \phi_i(z) dz^{\otimes i}
\end{equation}
Consider the symbol $\sigma(\Phi):=\zeta^K-\sum_{i=1}^K\phi_i\zeta^{K-i}$. We set
\begin{equation}
    L_\Phi:=\lc (z, \zeta)\in T^*(C\bs D)\relmid \sigma(\Phi)=0\rc
\end{equation}
where $\zeta$ is identified with the cotangent of $z$. This is independent of the choice of the local coordinate. The resulting 1-dimensional complex subvariety of $T^*C$ is called the {\em spectral curve of $\Phi$}.

We denote the defining projection $T^*C\rightarrow C$ by $\pi$. 
\begin{definition}
We say a branching point of $\pi|_{L_\Phi}$ is simple if the branching order is minimal.

\end{definition}
Note also that the set of the branching values is finite by the compactness of $C$.

\subsection{Symplectic language}
Let $z$ be a local coordinate of $C$ and $\zeta$ be the associated cotangent coordinate. Then the 1-form $\zeta dz$ on $T^*C$ does not depend on the choice of the local coordinate. The resulting global 1-form is called the Liouville form $\lambda$. The 2-form $\omega:=d\lambda$ gives a canonical symplectic structure of $T^*C$. 

Let $L$ be a holomorphic Lagrangian submanifold (equivalently, 1-dimensional smooth complex submanifold).
\begin{definition}
    We say $L$ is meromorphic if there exists $\Phi$ such that $L=L_\Phi$.
\end{definition}

We can recover $\Phi$ from $L_\Phi$ as follows. For each contractible open subset $U$ in $C\bs D$, we have $\pi|^{-1}_L(U)=U_1\sqcup \cdots \sqcup U_K$ where each $U_i$ is isomorphic to $U$ through $\pi|_L$. For each $i\in [K]=\lc 1,...,K\rc$, we have the 1-form on $U$ by pulling back $\lambda_i:=\lambda|_{U_i}$. Then we set
\begin{equation}
    \zeta^K-\sum_{i=1}^K\phi_i\zeta^{K-i}:=\prod_{i=1}^K(\zeta-\lambda_i).
\end{equation}
Then $\bigoplus_{i=1}^K\phi_i$ recovers $\Phi$.

For this reason, we will use $\Phi$ and $L$ interchangeably in the following sections.

\section{Trajectory structure}
We fix a meromorphic Lagrangian $L$. We denote the restriction of the projection $\pi\colon T^*C\rightarrow C$ to $L$ by $\pi_L$.

\subsection{PreStokes curves and trajectory}

\begin{definition}
Let $\gamma$ be a smooth immersed curve in $C\bs D$. A type structure of $\gamma$ is an ordered pair $\frakt=(s_1, s_2)$ of sections of $\gamma^*L$.
\end{definition}
For $\frakt=(s_1, s_2)$, we set $\frakt^{op}=(s_2, s_1)$.

\begin{definition}
We say an immersed curve with a type structure $(\gamma, (s_1, s_2))$ is a preStokes curve of type $(s_1, s_2)$ if the following hold
    \begin{enumerate}
        \item $s_1, s_2$ does not pass branching points.
        \item $\Im (e^{-2\pi i\theta}(\lambda_1-\lambda_2))=0$  where $\lambda_i:=s_i^*\lambda$.
    \end{enumerate}
We always orient a preStokes curve in a way that $\int\Re(e^{-2\pi i\theta}(\lambda_1-\lambda_2))$ is increasing.
\end{definition}

\begin{definition}
    We say a branching point (resp. value) is of type $\frakt=(s_1, s_2)$ if it is a simple branching point (resp. value) of $\pi_L$ where $s_1$ and $s_2$ merge. Note that a branching point is of type $\frakt=(s_1, s_2)$ is also of type $\frakt^{op}$.
\end{definition}

\begin{remark}
    In the literature of exact WKB analysis, branching value is called {\em turning point}.
\end{remark}

\begin{definition}
    A maximal preStokes curve is called a {\em trajectory}.
\end{definition}

\begin{definition}
\begin{enumerate}
    \item A saddle trajectory is a trajectory of type $(ij)$ connecting branching values of the same type.
    \item A closed trajectory is a periodic trajectory.
    \item A recurrent trajectory is a trajectory whose closure has nonempty interior.
\end{enumerate}
\end{definition}

The followings are basic facts.
\begin{lemma}[\cite{Strebel}]
For each simple branching value of type $(ij)$, three preStokes curves of type $(ij)$ emanate.
\end{lemma}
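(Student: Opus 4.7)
The plan is to reduce the statement to the classical local structure of horizontal trajectories of a meromorphic quadratic differential at a simple zero, as proved in \cite{Strebel}.

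First I would introduce the relevant quadratic differential. Near a simple branching value $z_0$ of type $(ij)$, the local sections $s_i,s_j$ of $L$ are individually multivalued --- they are interchanged by the monodromy around $z_0$ --- so $\lambda_i-\lambda_j$ is multivalued on a punctured neighborhood. However, its tensor square $Q:=(\lambda_i-\lambda_j)^{\otimes 2}$ is monodromy-invariant and descends to a well-defined meromorphic quadratic differential. Concretely, choose a local coordinate $z$ on $C$ centered at $z_0$ and a uniformizer $t$ on the local branch of $L$ with $z=t^{2}$; the two merging sheets then satisfy $\zeta_j(t)=\zeta_i(-t)$. Smoothness of $L$ together with simplicity of the branching forces $d\zeta_i/dt(0)\neq 0$, so $\zeta_i(t)-\zeta_j(t)=t\,h(t^{2})$ with $h(0)\neq 0$. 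Consequently
\begin{equation}
    Q = z\,h(z)^{2}\,dz^{\otimes 2},
\end{equation}
which extends across $z_0$ as a holomorphic quadratic differential with a simple zero there.

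Next I would rewrite the preStokes condition as a horizontal trajectory condition for $Q$. For a tangent vector $\dot\gamma$ of a curve, the equation $\Im(e^{-2\pi i\theta}(\lambda_i-\lambda_j)(\dot\gamma))=0$ squares to $e^{-4\pi i\theta}Q(\dot\gamma,\dot\gamma)\in\mathbb{R}_{\geq 0}$. Thus the preStokes curves of type $(ij)$ passing near $z_0$ are precisely the horizontal trajectories of the quadratic differential $e^{-4\pi i\theta}Q$.

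Finally I would invoke Strebel's local analysis at a simple zero. Introducing the natural coordinate $w:=\int\sqrt{e^{-4\pi i\theta}Q}$, one has $w\sim c\,z^{3/2}$ near $z_0$, so $z\mapsto w$ is a $3$-to-$1$ ramified map onto a neighborhood of $0$, under which horizontal trajectories correspond to horizontal lines in the $w$-plane. The single horizontal ray $\Im(w)=0$ through $w=0$ pulls back to exactly three arcs emanating from $z_0$ at equal angular spacing $2\pi/3$, yielding the three required preStokes curves of type $(ij)$. The only subtle input is the descent of $(\lambda_i-\lambda_j)^{\otimes 2}$ to a holomorphic quadratic differential with a simple zero, which hinges on smoothness of $L$ at the branching point; everything else is a direct application of Strebel's classification.
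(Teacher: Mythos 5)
Your proposal is correct and is essentially the same reduction the paper has in mind: the paper states the lemma citing \cite{Strebel} without proof and introduces the associated quadratic differential $(\lambda_i-\lambda_j)^{\otimes 2}$ in the following subsection precisely so that preStokes curves of type $(ij)$ become horizontal trajectories, after which the three-pronged local picture at a simple zero is Strebel's classical result. Your local computation (smoothness of $L$ at the simple branching point forces $\zeta'(0)\neq 0$, hence $(\lambda_i-\lambda_j)^{\otimes 2}=z\,h(z)^2\,dz^{\otimes 2}$ with $h(0)\neq 0$) correctly fills in why the associated quadratic differential has a simple zero there, which the paper leaves implicit; the only small imprecision is calling the multivalued natural coordinate $z\mapsto w\sim c\,z^{3/2}$ a ``$3$-to-$1$ ramified map,'' but the intended conclusion (three separatrices at angular spacing $2\pi/3$) is the right one.
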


We later use the following lemma:
\begin{lemma}\label{lem:discreteintersection}
    Let $\gamma_1, \gamma_2$ be two preStokes curves which are not recurrent and distinct. The intersections between $\gamma_1$ and $\gamma_2$ are discrete.
\end{lemma}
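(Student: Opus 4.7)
The assertion is local, so I would fix $p \in \gamma_1 \cap \gamma_2$ and show $p$ is isolated in the intersection. Since $D$ and the set of branching values of $\pi_L$ are finite, I may restrict to a contractible chart $U \subset C \setminus D$ around $p$ disjoint from those finite sets. Over $U$ the covering $\pi_L^{-1}(U)$ splits as $U_1 \sqcup \cdots \sqcup U_K$, yielding holomorphic 1-forms $\lambda_1, \ldots, \lambda_K$. Near $p$ each $\gamma_\alpha$ ($\alpha = 1, 2$) has a definite unordered type $\{i_\alpha, j_\alpha\}$ and is an integral arc of the real-analytic foliation $\mathcal{F}_\alpha$ defined by $\mathrm{Im}\bigl(e^{-2\pi i\theta}(\lambda_{i_\alpha}-\lambda_{j_\alpha})\bigr) = 0$.

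The non-recurrence hypothesis is used to guarantee that each $\gamma_\alpha$ is locally an honest embedded real-analytic 1-submanifold (rather than a dense set, which could intersect another curve non-discretely without sharing an arc); consequently $\gamma_1 \cap \gamma_2$ is a real-analytic subset of $\gamma_1$, which near $p$ is either discrete or contains an arc $I \ni p$. I would argue for contradiction that an arc $I$ occurs. If $\{i_1, j_1\} = \{i_2, j_2\}$, then $I$ lies on a common leaf of $\mathcal{F}_1 = \mathcal{F}_2$; by the uniqueness of the leaf through each point, the two curves must trace that single leaf, contradicting the hypothesis that $\gamma_1$ and $\gamma_2$ are distinct. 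If the types differ, then along $I$ the two foliations have coinciding tangent directions, which is equivalent to the holomorphic ratio $g := (\lambda_{i_1}-\lambda_{j_1})/(\lambda_{i_2}-\lambda_{j_2})$ being real on $I$. The further condition that $I$ is simultaneously an integral curve of both foliations, combined with the Cauchy--Riemann equations, forces $g$ to satisfy a differential identity whose only holomorphic solution is a constant; by analytic continuation $g$ is then constant on $U$, the two foliations coincide on $U$, and the argument reduces to the previous case.

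The main obstacle is the last step: two distinct real-analytic foliations can be tangent along a codimension-one subvariety without sharing any leaf, so the jump from pointwise tangency along $I$ to global coincidence of foliations is not automatic and requires the full integrability input that $I$ is a common leaf of both, together with an analytic continuation argument whose validity again rests on non-recurrence (to exclude pathological dense leaves that could spoil the continuation). Once the foliations are identified on $U$, the proof concludes as in the same-type case.
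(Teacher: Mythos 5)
The paper's own proof is a one-liner: since $\gamma_1$ and $\gamma_2$ are real-analytic, accumulating intersection points force them to coincide (the identity theorem), contradicting distinctness. Your same-type case captures this faithfully, with the extra observation that coincidence of a leaf arc propagates to coincidence of the full leaf by uniqueness. So far so good, and you are right that the genuinely delicate case is when the types $\{i_1,j_1\}$ and $\{i_2,j_2\}$ differ.

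Unfortunately, the resolution you offer for the different-type case is wrong. You claim that $I$ being a common leaf of $\mathcal{F}_1$ and $\mathcal{F}_2$, together with the Cauchy--Riemann equations, forces the holomorphic ratio $g=(\lambda_{i_1}-\lambda_{j_1})/(\lambda_{i_2}-\lambda_{j_2})$ to satisfy a differential identity whose only solution is a constant, whence the two foliations coincide. This is not true. Take $\theta=0$, and on a contractible $U$ choose the four distinct 1-forms $\lambda_{i_1}=dz$, $\lambda_{j_1}=0$, $\lambda_{i_2}=2dz+dz/(1+z)$, $\lambda_{j_2}=2dz$ (these are four distinct sheets of a local holomorphic Lagrangian, so nothing from the spectral-curve structure rules this out). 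Then $\mathcal{F}_1$ is the horizontal foliation $\operatorname{Im} z=\mathrm{const}$, $\mathcal{F}_2$ is the foliation by rays $\arg(1+z)=\mathrm{const}$ emanating from $-1$, and $g(z)=1+z$. The ray $\{z\in\mathbb{R}:z>-1\}$ is an honest common leaf of both foliations; $g$ is real on it; yet $g$ is certainly not constant and the two foliations do not coincide on any neighborhood. So the ``full integrability input that $I$ is a common leaf of both'' does not buy you coincidence of the foliations, and your argument does not reduce the different-type case to the same-type case.

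In fact the example shows something stronger: the horizontal line through $-1$ (type $(i_1,j_1)$) and the ray (type $(i_2,j_2)$) are two non-recurrent, distinct preStokes curves whose intersection is a whole arc, not a discrete set. So the difficulty you flagged is real and is not closed by the paper's one-line appeal to real-analyticity either; ``they coincide if they have accumulating intersection points'' handles the same-type case cleanly (both are then leaves of the same foliation and must agree globally) but does not by itself rule out one curve overlapping another of a different type along an arc. In the body of the paper this scenario is instead controlled by condition 4 of Definition~\ref{def:unobs} and by perturbing $\theta$. The honest takeaway for your write-up is: the same-type half is correct and matches the paper; the different-type half needs to be either restricted (assume the two preStokes curves have the same type, which is what the identity-theorem argument actually proves) or handled by a genericity/perturbation argument, not by a holomorphic-rigidity claim that is false.
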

\begin{proof}
    Since $\gamma_1$ and $\gamma_2$ are real analytic curve, they coincide if they have accumulating intersection points.
\end{proof}

\subsection{The case of quadratic differentials}
In this section, we consider the case when $\Phi\in H^0(C, \cK_C^{\otimes 2}(*D))$. In this case, the theory is classical. We refer to Strebel~\cite{Strebel} and Bridgeland--Smith~\cite{BridgelandSmith}. We use a little stronger version of the GMN condition in \cite{BridgelandSmith}.
\begin{definition}
A quadratic differential $\Phi$ is strongly GMN if the following holds:
\begin{enumerate}
    \item Any branch point is simple.
    \item $\Phi$ has at least one branch point.
    \item Any pole is with order $\geq 2$.
    \item $\Phi$ has at least one pole.
\end{enumerate}
\end{definition}

\begin{lemma}[{\cite[Lemma 3.1]{BridgelandSmith}}]\label{lem:norecurrence}
    If $\Phi$ is strongly GMN, there exists a nonempty open subset $V\subset S^1$ such that $\Phi$ does not admit closed and recurrent trajectories for $\theta\in V$.
\end{lemma}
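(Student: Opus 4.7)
The plan is to apply Strebel's structure theorem for horizontal trajectories of a meromorphic quadratic differential with at least one pole of order $\geq 2$. For each $\theta$, the complement in $C$ of the critical graph of the rotated differential $e^{-4\pi i\theta}\Phi$ decomposes into finitely many standard domains; closed trajectories populate ring domains, and recurrent trajectories populate spiral (density) domains.

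The first step is to show that the set $B \subset S^1$ of bad phases, those admitting a closed or recurrent trajectory, is at most countable. For closed trajectories: every ring domain at phase $\theta$ is swept out by a one-parameter family of closed trajectories sharing a common period $p = \oint_\gamma \sqrt{\Phi}$, and existence of the domain forces $\arg(p) \equiv 2\pi\theta \pmod{\pi}$. The possible values of $p$ are integrals of $\sqrt{\Phi}$ over free homotopy classes of loops in the spectral double cover, which form a countable subset of $\mathbb{C}$, yielding countably many admissible $\theta$. A parallel argument for recurrent trajectories, using the asymptotic cycle of the foliation restricted to a spiral domain, shows that the supporting phases again form a countable set.

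The second step is to show that $B$ is closed in $S^1$. Given a sequence $\theta_n \to \theta^*$ with closed or recurrent trajectories $\gamma_n$ at $\theta_n$, the presence of a pole of order $\geq 2$ yields horizontal half-plane neighborhoods into which neither closed nor recurrent trajectories can enter, confining each $\gamma_n$ to a compact subset of $C \setminus D$. A standard compactness argument then extracts a subsequential limit $\gamma^*$, which remains closed or recurrent for $\Phi$ at $\theta^*$. A closed countable subset of $S^1$ necessarily has empty interior, since every nonempty open arc in $S^1$ is uncountable, so $V := S^1 \setminus B$ contains a nonempty open subset, as required.

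The main obstacle will be the compactness step: one must rule out degeneration of $\gamma^*$ into a mere saddle connection or into a trajectory escaping to a pole. This is precisely where the strongly GMN conditions (simple zeros, poles of order $\geq 2$, existence of at least one of each) enter, as they provide enough local control near singular points to preserve the topological type of the trajectory in the limit.
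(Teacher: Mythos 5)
The paper supplies no proof here: the lemma is cited directly from Bridgeland--Smith, so your attempt has to be judged against that source's argument and on its own correctness. Your step 1, countability of the bad set $B$ via periods of ring and spiral domains, is essentially the standard argument and is fine. The genuine gap is step 2, the claimed closedness of $B$. You correctly flag the danger --- that the Hausdorff limit $\gamma^*$ of closed or recurrent trajectories $\gamma_n$ at phases $\theta_n \to \theta^*$ could degenerate to a union of saddle connections --- but then wave it away by invoking the strongly GMN hypotheses. These hypotheses do not help: simple zeros and poles of order $\geq 2$ are entirely compatible with ring domains whose closed leaves collapse onto the boundary saddle chain as $\theta$ varies, and the half-plane confinement only gives relative compactness, with no control over whether $\gamma^*$ runs through zeros. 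If $\gamma^*$ does pass through a zero, it is a chain of saddle connections, which is neither closed nor recurrent, so $\theta^*$ need not lie in $B$. Closedness of $B$ does not follow, and without it a countable $B$ could be dense in $S^1$, leaving $S^1 \setminus B$ with empty interior.

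The Bridgeland--Smith argument sidesteps this entirely. Every ring or spiral domain is bounded by saddle trajectories, so $B$ is contained in the set $S$ of phases admitting a saddle trajectory, and $S$ is countable by the same period argument. The crucial additional ingredient is that the saddle-free locus $S^1 \setminus S$ is \emph{open}: when no trajectory from a zero reaches another zero, the foliation decomposes into finitely many half-planes and horizontal strips, and this finite decomposition is structurally stable under small perturbation of $\theta$. An open set with countable complement is a nonempty open set, which is exactly the conclusion. What your proof needs is openness of the good set, not closedness of the bad set; as written, the conclusion does not follow from the steps you have.
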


\subsection{Associated quadratic differential}
To deal with $\Phi$, it is convenient to use the knowledge from the theory of quadratic differentials. For this purpose, we introduce an associated quadratic differential as follows.

\begin{example}
    To explain the following construction, we first deal with an example of $K=2$; $\Phi_1\oplus \Phi_2$. For the defining equation of the spectral curve $\zeta^2-\Phi_1\zeta-\Phi_2$, we take the square completion $(\zeta-\Phi_1/2)+\Phi_2-\Phi_1^2/4$. It is easy to observe that any preStokes curve defined by $\Phi$ is a preStokes curve of the quadratic differential $\Phi_2-\Phi_1^2/4$, and vice verca. 
\end{example}
In the following, we generalize this example to higher order cases. 
Let $B(\Phi)\subset C$ be the set of branching values of $\Phi$. Take a contractible open subset $U$ in $C\bs (D\cup B(\Phi))$. Then we have $\pi^{-1}(U)=U_1\sqcup \cdots \sqcup U_K$ where each $U_i$ is isomorphic to $U$ through $\pi$. For each $i\in [K]=\lc 1,...,K\rc$, we have the 1-form on $U$ by pulling back $\lambda_i:=\lambda|_{U_i}$, which is again denoted by $\lambda_i$. Let $\Delta$ be the diagonal subset of $\Sym^2[K]$. For each $(ij)\in \Sym^2[K]\bs \Delta$, we set
\begin{equation}
    \lambda_{ij}:=(\lambda_i-\lambda_j)^2,
\end{equation}
which is a quadratic differential on $U$. Consider the analytic continuation of $\lambda_{ij}$ over $C\bs (D\cup B(\Phi))$, and we denote the resulting Riemann surface by $C_{ij}'$.

Note that each sheet of $C_{ij}$ is parametrized by $(kl)\in \Sym^2[K]\bs \Delta$. For each point of $B(\Phi)$, we can fill out $C_{ij}'$ by one of the following:
\begin{enumerate}
    \item if a sheet is corresponding to $(kl)$ which is the type of the branch, then we fill out it by a disk. Then $\lambda_{ij}$ is extended by zero.
    \item if a sheet is corresponding to $(kl)$ which is not the type of the branch, then we fill out it by a branching disk. Then $\lambda_{ij}$ is extended.%\textcolor{blue}{branching valueが被っている場合もどうように、高次のbranchで埋める}
\end{enumerate}
We denote the resulting branched covering of $C\bs D$ by $C_{ij}$. The following is obvious from the above construction.
\begin{lemma}
    The Riemann surface $C_{ij}$ is a finite covering of $C\bs D$.
\end{lemma}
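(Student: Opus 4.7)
My plan is to exhibit $C_{ij}$ as a connected component of a global finite cover built from the symmetric fiber product of $L \to C\setminus D$, and then argue that the fill-in procedure adds only finitely many points with controlled local structure.

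First, I would set up the global model. Over $C\setminus(D\cup B(\Phi))$, the restricted projection $\pi_L\colon L\to C\setminus(D\cup B(\Phi))$ is an unramified $K$-fold covering, so the fiber product $L\times_{C\setminus(D\cup B(\Phi))} L$ minus its diagonal, quotiented by the $S_2$-swap, is a finite unramified cover of degree $\binom{K}{2}$. The labels $(kl)\in \Sym^2[K]\setminus \Delta$ are exactly the sheet labels near any fixed simply connected $U$, and the monodromy of this cover is the induced action of $\pi_1(C\setminus(D\cup B(\Phi)))$ on $\Sym^2[K]\setminus\Delta$ through $\pi_1$'s action on $[K]$. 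The quadratic differential $(\lambda_k-\lambda_l)^2$ is invariant under the swap, so it descends to a globally defined holomorphic quadratic differential on this cover.

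Next I would identify $C'_{ij}$. By construction, $C'_{ij}$ is obtained by analytically continuing the germ of $(\lambda_i-\lambda_j)^2$, so it coincides with the connected component of the $\binom{K}{2}$-fold cover above that contains the point labeled $(ij)$. In particular, $C'_{ij}\to C\setminus(D\cup B(\Phi))$ is a finite (unramified) covering of degree at most $\binom{K}{2}$.

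Finally I would handle the fill-in over each $b\in B(\Phi)$. Pick a small punctured disk $U^\ast\subset C\setminus D$ around $b$ that contains no other point of $B(\Phi)$. The local monodromy of $\pi_L$ around $b$ is a transposition $(k_0 l_0)$ (the type of the branching), so its action on $\Sym^2[K]\setminus\Delta$ fixes the label $(k_0l_0)$ and has orbits of length $1$ or $2$ on the remaining labels. Consequently the restriction of $C'_{ij}\to U^\ast$ is a finite disjoint union of punctured disks mapping to $U^\ast$ either by the identity (sheets corresponding to $(k_0l_0)$, where $\lambda_{ij}$ extends holomorphically by zero) or by a double cover (sheets whose label is swapped by the monodromy, where $\lambda_{ij}$ extends through a square-root branch). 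These are precisely the two fill-in prescriptions in the construction, and each is realized by adding a single point. Thus $C_{ij}\to C\setminus D$ is obtained from the finite cover $C'_{ij}\to C\setminus(D\cup B(\Phi))$ by adding finitely many points over $B(\Phi)$ in such a way that the extended map is a proper, finite-to-one holomorphic map, hence a finite (branched) covering.

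I do not expect a real obstacle in this lemma; the only step demanding care is the local analysis at $b\in B(\Phi)$, where one must check that the monodromy-orbit structure on $\Sym^2[K]\setminus \Delta$ under a simple transposition matches the two fill-in cases exactly; but this is a direct combinatorial check once the simplicity of the branching is used.
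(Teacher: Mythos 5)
Your proof is correct and makes explicit what the paper dismisses as ``obvious from the above construction.'' The global model you use — the complement of the diagonal in $L\times_{C\setminus(D\cup B(\Phi))}L$ modulo the $S_2$-swap, so that $C'_{ij}$ is the connected component through the germ labeled $(ij)$, hence unramified of degree at most $\binom{K}{2}$ over the connected base — is precisely the ``alternative way'' the paper proposes in the remark immediately following this lemma, so you are fleshing out the same construction rather than taking a genuinely different route. One small imprecision in your local analysis over a branching value of type $(k_0 l_0)$: the orbits of length $1$ on $\Sym^2[K]\setminus\Delta$ consist not only of $(k_0 l_0)$ but also of every $(kl)$ with $\{k,l\}\cap\{k_0,l_0\}=\emptyset$, and on those sheets $\lambda_{ij}$ extends holomorphically over $b$ but generically without a zero, so they are not described by your ``extends holomorphically by zero'' parenthetical nor by the square-root case. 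This imprecision is inherited from the paper's own two-case prescription (which folds the disjoint $(kl)$ sheets into the ``branching disk'' case as a branching of trivial order) and does not affect the conclusion: once the unramified cover $C'_{ij}\to C\setminus(D\cup B(\Phi))$ has finite degree and only finitely many points are glued in over the finite set $B(\Phi)$, the extended holomorphic map is proper and finite-to-one, hence a finite branched covering of $C\setminus D$.
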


As a result, we get a finite covering $\pi_{\widetilde C}\colon \widetilde{C}:=\bigcup_{(i,j)\in \Sym^2[K]\bs \Delta}C_{i j}\rightarrow C\bs D$ and a quadratic differential $\widetilde \Phi$ on $\widetilde C$.

For our purpose, either of construction works well. The following is also obvious.
\begin{lemma}
    Any preStokes curve of $\Phi$ is the projection of a preStokes curve of $\widetilde\Phi$.
\end{lemma}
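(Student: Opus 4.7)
The statement asserts that every preStokes curve of $\Phi$ arises as the projection of a preStokes curve of the quadratic differential $\widetilde\Phi$. My plan is to exhibit an explicit such lift. Given a preStokes curve $(\gamma,(s_1,s_2))$ of $\Phi$, I would use the type structure to single out one component $C_{ij}$ of $\widetilde C$, lift $\gamma$ to it, and then observe that the preStokes condition for the quadratic differential on that component is, along the lift, tautologically the preStokes condition for $\Phi$.

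First I would construct the lift locally. On a contractible open $U\subset C\setminus(D\cup B(\Phi))$ meeting $\gamma$, the sections $s_1,s_2$ pick out two sheets $U_i,U_j$ of $\pi|_L$ over $U$. The unordered pair $\{i,j\}\in \Sym^2[K]\setminus\Delta$ indexes a canonical sheet of $C_{ij}\to C\setminus D$ over $U$, namely the sheet on which $\lambda_{ij}=(\lambda_i-\lambda_j)^2$ holds by definition, and this provides a tautological lift of $\gamma\cap U$ to that sheet.

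To globalize, I would patch these local lifts along $\gamma$. By construction $C_{ij}$ is the analytic continuation of $\lambda_{ij}$ on $C\setminus(D\cup B(\Phi))$, so the monodromy of its sheet labels is induced by the action of loop monodromy on unordered pairs in $[K]$; on the other hand, the monodromy of the unordered pair $\{s_1,s_2\}$ along $\gamma$ is governed by the same action on sheets of $L$. The local lifts therefore glue into a single immersed curve $\widetilde\gamma$ in one component $C_{ij}$ of $\widetilde C$, and the ordered pair $(s_1,s_2)$ pulls back to an ordered choice of the two branches $\pm(\lambda_i-\lambda_j)$ of $\pm\sqrt{\widetilde\Phi}$ along $\widetilde\gamma$, i.e.\ a type structure for the double cover $L_{\widetilde\Phi}\to\widetilde C$.

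Verifying the preStokes condition is then immediate: the difference of the two lifted sections is $\pm 2(\lambda_i-\lambda_j)$, so $\Im\bigl(e^{-2\pi i\theta}\bigr)$ of this difference vanishes precisely when $\Im(e^{-2\pi i\theta}(\lambda_1-\lambda_2))=0$ on $\gamma$, and the induced orientations agree as well. The only genuine content is the globalization step, where I expect the main — but minor — obstacle to be reconciling the sheet monodromy of $L$ with the monodromy used in defining $C_{ij}$; since both are governed by the same permutation action on $[K]$, the match is automatic and the proof concludes quickly.
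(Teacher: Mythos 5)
The paper offers no proof of this lemma, noting only that it ``is also obvious'' from the construction of $\widetilde C$ and $\widetilde\Phi$; your argument is precisely the spelled-out version of that observation. Your lift via the tautological sheet $\{s_1,s_2\}\in\Sym^2[K]\setminus\Delta$ of $C_{ij}$, the compatibility of monodromies, and the match of the preStokes conditions through $\sqrt{\widetilde\Phi}=\pm(\lambda_i-\lambda_j)$ are all correct and give exactly the detail the paper elides.
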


\begin{remark}
Alternative way to get a similar object is as follows: We consider the fiber product $L\times_CL$ using $\pi_L$. We denote the projections by $\pi_i\colon L\times_CL\rightarrow L$ ($i=1,2$).
This is again a finite branched covering of $C$. Note that the diagonal $\Delta\subset L\times_CL$ is a connected component. On the complement $\Delta^c$, we consider the quadratic differential $(\pi_1^{*}\lambda_1-\pi_1^{*}\lambda_2)^2$. It descends to a quadratic differential $\widetilde\Phi'$ on $\widetilde C':=\Delta^c/(\bZ/2)$. We have a branched covering map $\widetilde C'\to C\bs D$, and any preStokes curve of $\Phi$ is the projection of a preStokes curve of $\widetilde\Phi '$.
\end{remark}

\subsection{Global structure}
To have tame behavior of global trajectories, we assume a higher-order analogue of GMN condition.
\begin{definition}[Higher order GMN condition]\label{def:GMN}
We say $\Phi$ is strongly GMN if the followings hold:
\begin{enumerate}
    \item $\widetilde \Phi$ is strongly GMN, and
    \item $\widetilde \Phi$ has poles on $D$.
\end{enumerate}
\end{definition}

\begin{remark}
    For our main results, one can remove the second condition easily by perturbing $\theta$ more. We assume here just for simplicity of the notation.
\end{remark}

Here we give a sufficient condition to be strongly GMN. For this purpose, we recall the Hukuhara--Levelt--Turittin theorem.
\begin{definition-lemma}
    Let $(\cE, \nabla)$ be a meromorphic flat connection over $\bC\{z\}$. After the base change to $\bC[[z]]$, there exists an isomorphism
    \begin{equation}
        (\cE, \nabla)\cong \bigoplus_i(\cE(f_i)\otimes R_i, \nabla_i\otimes \nabla_i')
    \end{equation}
    where $f_i$ is a finite Puiseux series, $(\cE(f_i), \nabla_i)=(\cO\{z\}, d-df_i)$, and $(R_i, \nabla_i')$ is a regular connection. The set $\{(f_i, \rank R_i)\}$ is called the formal type of $\nabla$.
\end{definition-lemma}
For a point $z\in D$, take a local neighborhood, and consider a differential equation 
\begin{equation}
    (\partial^K-\sum_{i=1}^K\Phi_i\partial^{K-i})\psi=0
\end{equation}
associated to $\Phi$. Viewing this as a differential system, we define the formal type of $\Phi$ as the formal type of this equation.

\begin{lemma}
    $\Phi$ is strongly GMN if the following holds:
\begin{enumerate}
    \item Any branching point is simple.
    \item There exists at least one branching point for each connected component of the spectral curve.
    \item For any $i$, $f_i$ of the formal type has pole order $\geq 1$ at any pole.
    \item For any $i$, $\rank R_i=1$. 
\end{enumerate}
\end{lemma}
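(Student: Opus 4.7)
The plan is to verify, clause by clause, that the hypotheses imply both conditions of Definition~\ref{def:GMN}: namely, that $\widetilde\Phi$ is strongly GMN as a quadratic differential on $\widetilde C$, and that its poles lie over $D$. Throughout I would use the following dictionary between the sheets $\lambda_i = s_i^*\lambda$ of $L$ and the Hukuhara--Levelt--Turittin decomposition at a point of $D$: the formal WKB ansatz $\psi = \exp(f_i)\cdot r_i$ applied to $(\partial^K - \sum_i \Phi_i \partial^{K-i})\psi = 0$ forces $\partial f_i$ to be a root of the symbol polynomial, so to leading order the local sheets of $L$ are given by $\lambda_i = df_i$ (with regular corrections coming from $R_i$).

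For the zero analysis, recall that on $C_{ij}$ the quadratic differential is $\lambda_{ij} = (\lambda_i - \lambda_j)^2$. Near a branching value of $\pi_L$ of type $(i,j)$, hypothesis (1) gives $\lambda_i - \lambda_j \sim c\sqrt{z}$, hence $\lambda_{ij} \sim c^2 z$ is a simple zero, corresponding to a simple branch of $\widetilde\Phi$. Away from $B(\Phi)$, the sheets $\lambda_i$ of $L$ are pairwise distinct and holomorphic, so no further zeros appear. Hypothesis (2) supplies at least one branching value in each connected component, producing a branch point of $\widetilde\Phi$ on every component of $\widetilde C$. This handles clauses (1) and (2) of strongly GMN for $\widetilde\Phi$.

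For the pole analysis at a point $p \in D$, hypothesis (4) is used first: the condition $\rank R_i = 1$ for every $i$ forces the Puiseux exponentials $f_1, \ldots, f_K$ appearing in the formal decomposition to be pairwise distinct, so $df_i \neq df_j$ for $i \neq j$. Hypothesis (3) then guarantees that each $f_i$ has pole of order $\geq 1$, whence $df_i$ has pole of order $\geq 2$ at $p$. Consequently $\lambda_i - \lambda_j$ has pole of order $\geq 2$ and $\lambda_{ij} = (\lambda_i-\lambda_j)^2$ has pole of order $\geq 4$. This yields clauses (3) and (4) of strongly GMN for $\widetilde\Phi$, and simultaneously places the poles of $\widetilde\Phi$ over $D$, which is the second condition in Definition~\ref{def:GMN}.

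The main delicate point is the dictionary step. One has to check that the identification $\lambda_i = df_i$ remains valid on the Puiseux-branched covering with the correct pole orders (in particular that passing to a ramified cover does not lower the effective pole order below $2$), and that the $\rank R_i = 1$ hypothesis genuinely excludes the degenerate case $f_i = f_j$ for some $i \neq j$ (which would otherwise make $\lambda_{ij}$ vanish identically along a component). Once this formal-to-analytic translation is carefully installed, the zero-order analysis is local and the pole-order inequalities are bookkeeping; all the work is concentrated in passing between the ODE data $\{(f_i, R_i)\}$ and the 1-forms $\lambda_i$ on $L$.
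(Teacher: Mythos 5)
Your proof follows the same route the paper takes: verify the conditions of Definition~\ref{def:GMN} for $\widetilde\Phi$ one by one, matching lemma conditions (1),(2) to simple zeros and existence of zeros, and conditions (3),(4) to pole order $\geq 2$ and existence of poles on $D$, via the dictionary $\lambda_i = df_i$ between sheets of $L$ and the Hukuhara--Levelt--Turittin data. The paper's proof is a bare three-sentence mapping of conditions; you fill in the verification (and correctly flag the delicate point it suppresses), though for the pole-order step the cleaner argument is that HLT normalization forces $f_i - f_j$ itself to have a pole of order $\geq 1$, hence $\lambda_i - \lambda_j = d(f_i - f_j)$ has pole order $\geq 2$ --- deducing it from each $df_i$ separately having pole order $\geq 2$, as you phrase it, does not by itself exclude cancellation of leading terms.
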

\begin{proof}
We would like to check that $\widetilde \Phi$ satisfies Definition~\ref{def:GMN}.
    The conditions 1 and 2 in the above imply the conditions 1 and 2 of Definition~\ref{def:GMN}. The conditions 3 and 4 in the above implies 3 of Definition~\ref{def:GMN}. The condition 3 implies 4 of Definition~\ref{def:GMN}.
\end{proof}

\begin{lemma}
    If $\Phi$ is strongly GMN, it does not have closed trajectories and recurrent trajectories for generic $\theta$.
\end{lemma}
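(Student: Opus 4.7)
The plan is to reduce to the classical quadratic-differential case via the associated quadratic differential $\widetilde\Phi$ on the branched cover $\widetilde C$, and then invoke Lemma~\ref{lem:norecurrence}. By hypothesis and Definition~\ref{def:GMN}, $\widetilde\Phi$ is strongly GMN in the quadratic sense. Applying Lemma~\ref{lem:norecurrence} to each connected component $C_{ij}$ of $\widetilde C$ (of which there are only finitely many, indexed by $\Sym^2[K]\bs \Delta$) yields a nonempty open subset $V_{ij}\subset S^1$ on which $\widetilde\Phi|_{C_{ij}}$ admits no closed or recurrent trajectories. To upgrade this to a generic (open dense) conclusion, I would appeal to the refinement that the exceptional $\theta$-set for a strongly GMN quadratic differential is in fact nowhere dense, as one sees from the rotation argument underlying the Strebel/Bridgeland--Smith trichotomy~\cite{Strebel,BridgelandSmith}. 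Setting $V := \bigcap_{(ij)} V_{ij}$ then yields an open dense subset of $S^1$ on which $\widetilde\Phi$ has no closed or recurrent trajectories on any component.

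Next, I would argue by contradiction. Suppose $\gamma$ is a closed or recurrent trajectory of $\Phi$ for some $\theta\in V$. By the earlier lemma asserting that every preStokes curve of $\Phi$ is the projection of a preStokes curve of $\widetilde\Phi$, we obtain a lift $\widetilde\gamma$ in some component $C_{ij}$. For closedness: the global existence of the type structure $(s_i,s_j)$ on the closed curve $\gamma$ forces the monodromy of $L$ along $\gamma$ to fix the ordered pair $(i,j)$ of sheets, so $\widetilde\gamma$ closes after one revolution of $\gamma$. For recurrence: $\pi_{\widetilde C}$ is an open map (being a finite branched covering), so a small open disk $W\subset \mathrm{Int}(\overline\gamma)\subset C$ pulls back to an open disk in $C_{ij}$; combined with the density of $\gamma$ in $W$ and the uniqueness of analytic continuation of the ordered sheet labels $(s_i,s_j)$, this open disk lies inside $\overline{\widetilde\gamma}$. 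Either way, $\widetilde\Phi|_{C_{ij}}$ admits a closed or recurrent trajectory, contradicting the choice of $V$.

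The main obstacle is the recurrence-lifting step. One must rule out the pitfall that the recurrent behavior of $\gamma$ downstairs could be realized by several distinct lifts of $\gamma$ in $C_{ij}$ rather than by the single lift $\widetilde\gamma$. I would address this by characterizing $\widetilde\gamma$ as \emph{the} analytic continuation of the ordered sheet labels $(s_i,s_j)$ along $\gamma$, so that the lift is canonical once the ordered type is fixed, and then by appealing to Strebel's structure theorem, which asserts that a single recurrent trajectory of a quadratic differential fills a spiral domain, i.e.\ an open subset of its ambient component. This places an open subset inside $\overline{\widetilde\gamma}$ itself, completing the reduction.
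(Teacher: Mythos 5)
Your approach is the same as the paper's: pass to the associated quadratic differential $\widetilde\Phi$ on $\widetilde C$, which is strongly GMN by Definition~\ref{def:GMN}, and then invoke Lemma~\ref{lem:norecurrence}. The paper's own proof is literally the single sentence ``This is by Lemma~\ref{lem:norecurrence}.'' -- so you are supplying details the paper elides rather than taking a different route.

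Two of those details need adjustment. First, there is no reason to apply Lemma~\ref{lem:norecurrence} component-by-component over the $C_{ij}$'s and then intersect the $V_{ij}$'s: as stated the lemma only yields nonempty open sets, and a finite intersection of those can be empty. Your fix (upgrading to ``open dense'') is a claim you neither prove nor need -- just apply the lemma once to the single quadratic differential $\widetilde\Phi$ on $\widetilde C$, which directly produces one nonempty open $V$.

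Second, and more substantively, the recurrence-lifting step is circular as written. Strebel's structure theorem tells you that a \emph{recurrent} trajectory fills a spiral domain; but whether $\widetilde\gamma$ is recurrent is exactly what you are trying to establish, so citing the theorem to ``place an open subset inside $\overline{\widetilde\gamma}$'' presupposes the conclusion. Nor does uniqueness of the lift settle the worry you raise: the single lift $\widetilde\gamma$ can revisit an evenly-covered disk $W$ through \emph{different} sheets $W_1,\dots,W_n$ of $\pi^{-1}(W)$ on different passes, because the pair $(s_1,s_2)$ varies continuously along $\gamma$ and need not return to the same sheet. The clean repair is to argue contrapositively: for $\theta\in V$ no trajectory of $\widetilde\Phi$ is recurrent, so by the structure theorem each $\overline{\widetilde\gamma}$ is nowhere dense in $\widetilde C$; since $\pi_{\widetilde C}$ is a finite covering (hence proper, closed, and a local homeomorphism), $\overline{\gamma}\subseteq\pi_{\widetilde C}(\overline{\widetilde\gamma})$ is nowhere dense in $C\setminus D$, so $\gamma$ is not recurrent. (Equivalently, in your direct version one can salvage the argument by observing that $W$ is covered by the finitely many closed sets $\overline{A_i}$ -- where $A_i$ is the part of $\gamma\cap W$ whose lift enters $W_i$ -- and a finite union of nowhere dense sets is nowhere dense, so some $\overline{A_i}$ has nonempty interior and $\widetilde\gamma$ is dense in an open subset of $W_i$. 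But this Baire-type step must be made explicit; the Strebel citation does not replace it.)

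Your treatment of the closed-trajectory case is fine. With the recurrence step repaired, the proof is correct and matches the paper's intended reduction.
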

\begin{proof}
    This is by Lemma~\ref{lem:norecurrence}.
\end{proof}

\section{Stokes graph (a.k.a. WKB spectral network)}
In this section, we first define the Stokes graph (a.k.a. WKB spectral network).

\subsection{Stokes trees}
A tree is a connected acyclic graph whose valency at each interior vertex is $\geq 3$. A rooted tree is a tree with a distinguished exterior vertex. For a rooted tree, the distinguished vertex is called the root vertex. The edge connected to the root vertex is called the root edge. A leaf vertex is an exterior vertex which is not the root vertex. A leaf edge is the edge connected to a leaf vertex. We always orient a rooted tree toward the root.

We first define the notion of a Stokes tree. We fix a meromorphic Lagrangian $L$ and $\theta$. 
\begin{definition}[Open Stokes tree, or 2d BPS state]
An open Stokes tree is the following: an immersion $\iota_\cT\colon \cT\rightarrow C$ of a finite rooted tree  $\cT$ with type structures $\frakt_e$ on the interior of each edge $e$ of $\cT$ such that 
\begin{enumerate}
    \item Each edge $e$ is a preStokes curve of type $\frakt_e$.
    \item The orientation of each edge as a preStokes curve is the same as the one induced from the orientation of $\cT$.
    \item The root vertex is in $D$. The restriction $\iota|_{\cT\bs \{root\}}\subset C\bs D$.
    \item Any leaf edge of type $\frakt$ has its leaf vertex is in the branching value of type $\frakt$.
    \item Any interior vertex is in $C\bs (D\cup B(\Phi))$. Here $B(\Phi)$ is the set of branching values.
    \item Each interior vertex has the cyclic order induced from the orientation of $C$. Suppose $v$ is an interior vertex and consider the ordered set of edges $e_1,...,e_k$ on $v$ with rooted $e_1$. Then the condition is the following: Each $e_i$ is a preStokes curve of type $(i-1,i)$ for $i\neq 1$. For $i=1$, it is a preStokes curve of type $(k1)$.
\end{enumerate}
We always consider each edge of an open Stokes tree as a preStokes curve is of the type whose orientation is compatible with the orientation of the rooted tree. We always consider open Stokes trees up to reparametrization.
\end{definition}

\begin{definition}[Closed Stokes tree, or 4d BPS state of genus $0$]
A closed Stokes tree is the following: a finite tree $\cT$ immersed in $C$ with a type structure $\frakt_e$ on the interior of each edge $e$ of $\cT$ satisfying the following:
\begin{enumerate}
    \item Each edge $e$ is a preStokes curve of type $\frakt_e$.
    \item Any exterior vertex of type $\frakt$ has its leaf vertex is in the branching value of type $\frakt$.
    \item Any interior vertex is in $C\bs (D\cup B(\Phi))$. 
    \item Each interior vertex has the cyclic order induced from the orientation of $C$. Suppose $v$ is an interior vertex and consider the ordered set of edges $e_1,...,e_k$ on $v$ with rooted $e_1$. Then the condition is the following: Each $e_i$ is a preStokes curve of type $(i-1,i)$ up to opposition.
\end{enumerate}
We always consider each edge of a rooted Stokes tree as a preStokes curve is of the type whose orientation is compatible with the orientation of the rooted tree.  We always consider closed Stokes trees up to reparametrization.
\end{definition}

\begin{definition}
\begin{enumerate}
    \item  The Stokes graph $\frakS_{L, \theta}$ of $L$ at $\theta$ is the set of open and closed Stokes trees. We denote the subset of open Stokes trees by $\frakO_{L, \theta}$, and the set of closed Stokes trees by $\frakC_{L, \theta}$. We sometimes omit $L, \theta$ from the notation if the context is clear.
    \item A Stokes graph of $L$ at $\theta$ is a subset of $\frakS_{L, \theta}$.
    \item We say the angle $\theta$ is unobstructed if $\frakC_{L, \theta}$ is empty.
\end{enumerate}
\end{definition}
\begin{remark}
    If one takes the union of the images of Stokes trees, we arrive at a classical point of view including \cite{Virtual} and \cite[\S 9.1]{GMNspec}. However, it is possible that we have a dense/accumulating image. In that case, the classical point of view does not work well.
\end{remark}
In \S \ref{section:proofofunobs}, we state and prove a generic exisntence result of unobstructed angles.

\subsection{Mass filtration}

We next assign a positive real number called BPS mass to each Stokes tree. For each preStokes curve $l$ of type $(ij)$, we set
\begin{equation}
    m(l):=\int_{l}e^{-2\pi i\theta}(\lambda_i-\lambda_j)\in \bR_{\geq 0}.
\end{equation}
For an open Stokes tree $\cT$, we set
\begin{equation}
    m(\cT):=\sum_{l:\text{non-rooted edge of $\cT$}}m(l).
\end{equation}
For the later use, for a point $p\in \cT$ of an open Stokes tree, we set
\begin{equation}
    m(\cT,p):=m(\cT_p)+ \int_{l_p}e^{-2\pi i\theta}(\lambda_i-\lambda_j)
\end{equation}
where $l_p$ is the subset of edge starting from the ascendant vertex ending at $p$, and $\cT_p$ is the subtree of $\cT$ whose rooted edge is $l_p$.

For a closed Stokes tree $\cT$, we set
\begin{equation}
    m(\cT):=\sum_{l:\text{edge of $\cT$}}m(l).
\end{equation}

The Stokes graph $\frakS$ is equipped with an increasing filtration $\lc \frakS_c \rc_{c\in \bR_{> 0}}$ where $\frakS_c$ consists of those with mass less than $c$. We set
\begin{equation}
\begin{split}
    \frakO_c:=\frakS_c\cap \frakO, \frakC_c:=\frakS_c\cap \frakC,
    &\frakS_0:=\bigcap \frakS_c, \frakO_0:=\bigcap_{c\in \bR_{>0}} \frakO_c, \frakC_0:=\bigcap_{c\in \bR_{>0}} \frakC_c.
\end{split}
\end{equation}

Suppose $L$ is strongly GMN. Suppose there are no saddle trajectories at $\theta$. Then any trajectory emanating from a branching value ends at $D$. 
\begin{definition}
    The initial Stokes graph of $L$ is the set of Stokes trees consisting of the trajectories emanating from the turning points. 
\end{definition}
Note that any Stokes tree in the initial graph is mass zero.

\section{The generic existence of unobstructed angles}\label{section:proofofunobs}
\subsection{Statement}

Here is our main theorem.
\begin{theorem}\label{thm:main}
    For a strongly GMN Lagrangian $L$, the set of unobstructed $\theta$'s is dense in $S^1$.
\end{theorem}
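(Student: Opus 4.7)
The plan is to show that the set of $\theta \in S^1$ for which $\frakC_{L,\theta}\neq \emptyset$ is meager in $S^1$, and then conclude by Baire category that the unobstructed angles form a dense subset. The first step is to reduce to the associated quadratic differential $\widetilde\Phi$ on the branched cover $\pi_{\widetilde C}\colon \widetilde C \to C\bs D$: strong GMN for $L$ forces $\widetilde\Phi$ to be strongly GMN as a quadratic differential, so Lemma~\ref{lem:norecurrence} supplies an open dense $V\subset S^1$ on which $\widetilde\Phi$ has no closed and no recurrent trajectories. For such $\theta$ every preStokes curve of $\Phi$ lifts to a Strebel trajectory on $\widetilde C$ that terminates at a pole of $\widetilde\Phi$ or at a branching value; this is the tameness needed to organize closed Stokes trees by mass.

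Next, for each $N \in \bZ_{>0}$ and each $\theta\in V$, I would establish that the set of closed Stokes trees of mass at most $N$ is finite. This is a Gromov-type compactness argument: the mass of an edge equals its length in the flat metric $|\widetilde\Phi|^{1/2}$ up to a phase, so a uniform mass bound gives a uniform length bound on the branched cover, while the cyclic-type constraint at interior vertices together with the finite branching locus leaves only finitely many combinatorial types. Potential degenerations of a sequence of such trees---edges collapsing to points, interior vertices colliding with branching values, or escape into the pole neighborhoods in $D$---are ruled out by strong GMN (poles of $\widetilde\Phi$ are of order $\geq 2$ and push trajectories away in bounded flat-metric time, and interior vertices are required by definition to avoid $D\cup B(\Phi)$).

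With finiteness in hand, for each combinatorial type $\tau$ of closed Stokes tree (with prescribed leaf assignments to branching values) the existence of such a tree at $\theta$ is a nontrivial real-analytic equation in $\theta$: one attaches to $\tau$ a well-defined central-charge period $Z_\tau \in \bC^*$, and a realization of $\tau$ can exist only at angles satisfying $e^{-2\pi i\theta}Z_\tau\in\bR_{>0}$, which is a discrete condition on $\theta$. Taking the countable union over combinatorial types and over $N \in \bZ_{>0}$, the obstructed set in $V$ becomes a countable union of discrete subsets, hence meager in $V$. Baire category now gives density of the unobstructed angles in $V$, and density in all of $S^1$ follows from the density of $V$.

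The main obstacle is the Gromov compactness step. A total mass bound does not a priori bound the number of interior vertices of a tree, so one needs a uniform positive lower bound on the mass of each edge that participates in a closed Stokes tree, and this bound must survive the degeneration modes listed above. Translating strong GMN into such quantitative estimates---especially the quantitative behavior of trajectories near poles of $\widetilde\Phi$ and near $B(\Phi)$---is the technical heart of the proof and is precisely what justifies performing the analysis on $\widetilde C$ rather than on $C$ directly.
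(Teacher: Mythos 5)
Your proposal takes a genuinely different route from the paper. The paper constructs the unobstructed Stokes graph step by step via the mass filtration (Lemma~\ref{lemma:keylemma}): starting from $\frakS(0)$, it adds scatterings at collisions, shrinks the interval of good $\theta$'s to a nested sequence $I_n$ of relatively compact subintervals, proves a uniform lower bound $w_{\min}$ on the mass created at each step so that $E_{\frakS(n)}\to\infty$, and extracts an unobstructed angle in $\bigcap_n I_n$. You instead try to show directly that the obstructed set is meager and appeal to Baire. This is a legitimate strategy, and in fact, pushed through correctly, it is shorter than the paper's for the density statement alone; the paper's heavier inductive machinery earns its keep in the later sections on wall-crossing data and sheaf quantization, and the paper's Gromov compactness (Theorem~\ref{thm:gromovcompact}) is itself a by-product of that induction rather than a direct length estimate of the kind you sketch.

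There is, however, a genuine gap in your key step. You attach a central-charge period $Z_\tau\in\bC^*$ to a combinatorial type $\tau$ of closed Stokes tree and claim it is well-defined. It is not: the period depends on the relative homology class of the tree in $L$, not merely on the tree topology, leaf assignments, and edge types. Two realizations of the same $\tau$ (at different $\theta$, or even at nearby $\theta$ after a wall-crossing) can differ in $H_1(L,\bZ)$ --- for instance by winding around a puncture --- and so have different periods, which breaks the asserted ``one discrete equation per combinatorial type.'' The correct statement is indexed by homology: each closed Stokes tree lifts to a $1$-cycle $\gamma$ in $L$ (take the $s_i$-sheet lift of each edge of type $(s_is_j)$ with $+$ sign and the $s_j$-sheet lift with $-$ sign; the cyclic condition at interior vertices and the merging of the two sheets at leaf branching values kill the boundary), and since $\lambda|_L$ is closed, $\int_\gamma\lambda$ depends only on $[\gamma]\in H_1(L,\bZ)$. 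The preStokes condition on every edge then gives $e^{-2\pi i\theta}\int_\gamma\lambda=\sum_e m(e)>0$, so $\theta$ is uniquely determined by $[\gamma]$, and the obstructed set is contained in the countable set of phases of nonzero periods of $H_1(L,\bZ)$. Note that once phrased this way the Gromov compactness step you flagged as the technical heart --- and did not actually establish, lacking the uniform lower edge-mass bound --- is not needed for the density statement at all; countability of $H_1(L,\bZ)$ does all the work. As written, though, your argument has both the ill-defined $Z_\tau$ and an unproven compactness lemma, so it does not yet constitute a proof.
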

We will prove this in the rest of this section.

\subsection{Unobstructed diagram}
We fix a strongly GMN Lagrangian $L$. We also fix $\theta\in S^1$. 

\begin{definition}[Collision]
\begin{enumerate}
\item A collision between two Stokes trees is a crossing of the rooted edges of the trees.
\item We say a set of Stokes trees $\cT_1,...,\cT_s$ forms a collision $p$ if each two of them forms a collision at $p$.
    \item Let $\cT_1,...,\cT_l$ be a set of Stokes trees making a collision at $p$. We say $\cT_1,...,\cT_l$ forms an ordered collision if the types at $p$ of $t_i$ takes the form $(s_i, s_{i+1})$.
    \item We say an ordered collision is cyclic if moreover $s_{l+1}=s_1$. 
\end{enumerate}
\end{definition}
For an ordered collision $p$ formed by $\cT_1,...,\cT_l$, we set
\begin{equation}
    m(\cT_1,...,\cT_l; p):=\sum_{i=1}^lm(\cT_i,p),
\end{equation}
and call it mass of the collision $p$.

\begin{definition}\label{def:unobs} Fix $E\in \bR_{>0}$.
We fix $\theta$. Let $\frakS$ be a Stokes graph. We say $\frakS$ is unobstructed modulo $E$ if 
\begin{enumerate}
\item Locally finite.
    \item Any ordered collision happens outside the set of turning points.
    \item There are no closed Stokes trees with energy less than $E$.
    \item There are no non-discrete overlapping between $(ji)$ and $(ik)$ preStokes curves in Stokes trees in $\frakS$.
    \item The set of ordered collisions with energy less than $E$ is finite in $C\bs D$. 
\end{enumerate}
\end{definition}

\begin{lemma}\label{lem:initialunobs}
Let $L$ be a strongly GMN Lagrangian.
    For $\theta\in S^1$, there exists $\epsilon>0$ such that the initial graph of $L$ is unobstructed for some energy $E>0$ for any $\theta'\in (\theta-\epsilon, \theta+\epsilon)\bs\{\theta\}$.
\end{lemma}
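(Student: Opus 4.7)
The approach is a local Strebel analysis at each turning point, made uniform in $\theta$ by continuity. Since $L$ is strongly GMN, every branching value of $\pi|_L$ is simple, the associated quadratic differential $\widetilde\Phi$ has only simple zeros over the (finitely many) turning points, and exactly three preStokes curves emanate from each turning point; in particular the initial graph has finitely many edges. I would first choose pairwise disjoint open disks $U_v\subset C\setminus D$ around each turning point $v$ that are small enough to be trivialized, after pullback to $\widetilde C$, by Strebel coordinates $w_v=\int\sqrt{\widetilde\Phi}$. In these coordinates the three $\theta$-preStokes curves of type $(ij)$ emanating from $v$ become straight half-lines based at the origin whose directions differ by $2\pi/3$, and in particular they are pairwise disjoint away from $v$. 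By continuous dependence of the trajectory ODE on $\theta$ and on the initial data, there exist $\epsilon_0,E_0>0$ such that, for every $\theta'\in(\theta-\epsilon_0,\theta+\epsilon_0)$ and every turning point $v$, all three $\theta'$-preStokes curves emanating from $v$ stay inside $U_v$ and remain pairwise disjoint (away from $v$) while their mass is at most $E_0$.

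With these choices conditions (1)--(3) and (5) of Definition~\ref{def:unobs} hold for the initial graph with cut-off $E_0$. Local finiteness (1) is immediate from the finite edge count. A collision is a transverse crossing of interiors of two rooted edges, so it takes place in $C\setminus(D\cup B(\Phi))$, giving (2). A closed Stokes tree in the initial graph is a saddle trajectory connecting two distinct turning points $v,v'$; since such a trajectory must leave $U_v$ before reaching $v'$, its mass is $\geq E_0$, which is (3). For (5), a collision of two trees at a point $p$ with total mass $<E_0$ would place $p$ inside both $U_{v}$ and $U_{v'}$, where $v,v'$ are the starting turning points of the two edges. If $v\neq v'$ this is impossible by disjointness of the $U$'s; if $v=v'$, the two edges are distinct emanating rays inside $U_v$ and so are disjoint. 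Hence the set of collisions of mass $<E_0$ is empty.

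The main obstacle is condition (4): ruling out a non-discrete overlap between $(ji)$- and $(ik)$-preStokes curves. By Lemma~\ref{lem:discreteintersection} such an overlap forces the two real-analytic curves to coincide as sets, i.e., to be a single curve satisfying the $\theta'$-reality condition for both types simultaneously, so that $\lambda_j-\lambda_i$ and $\lambda_i-\lambda_k$ are both real multiples of $e^{2\pi i\theta'}$ along it. This is a codimension-one condition on $\theta'$: for each ordered triple $(j,i,k)$ and each pair of emanating branches among the finitely many edges of the initial graph, the set of $\theta'$ realizing such a coincidence is discrete in $S^1$. Shrinking $\epsilon_0$ to some $\epsilon\leq\epsilon_0$ to exclude these finitely many discrete values --- among which $\theta$ itself may lie, explaining the use of the punctured neighborhood --- gives (4) for every $\theta'\in(\theta-\epsilon,\theta+\epsilon)\setminus\{\theta\}$. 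Taking $E=E_0$ finishes the proof.
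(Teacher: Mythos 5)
The paper's own proof of this lemma is brief: it invokes Lemma~\ref{lem:norecurrence} (the Bridgeland--Smith genericity result for the associated quadratic differential) to obtain local finiteness, uses finiteness of turning points and initial trees for condition~(2), and then explicitly defers conditions~(3), (4), (5) to the later inductive argument. Your proof is a genuinely different, more self-contained route: you localize near each turning point in Strebel coordinates and choose the mass cut-off $E_0$ small enough that the low-mass portions of the initial trajectories are confined to pairwise disjoint disks; this cleanly handles~(2), (3), (5) by elementary disjointness, and your treatment of~(4) as a discrete condition in $\theta'$ is also reasonable for the finitely many initial trajectory pairs.

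However, there is a real gap in your handling of condition~(1). You write that local finiteness is ``immediate from the finite edge count,'' but the initial graph consists of \emph{full trajectories}, not just their low-mass germs near the turning points. Even a single recurrent trajectory has a closure with nonempty interior, so local finiteness fails no matter how few trajectories there are. Your disk/continuity argument controls the part of each trajectory with mass $<E_0$, but says nothing about the high-mass tail escaping $U_v$, which is precisely where recurrence lives. This is why the paper appeals to Lemma~\ref{lem:norecurrence}: one needs to know that for $\theta'$ in a suitable (punctured) neighborhood, the associated quadratic differential $\widetilde\Phi$ has no closed or recurrent trajectories; without that, nothing rules out dense trajectories, and local finiteness can genuinely fail. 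You should replace the ``finite edge count'' sentence with an invocation of Lemma~\ref{lem:norecurrence} (or of the underlying Bridgeland--Smith statement that the set of phases admitting closed/recurrent trajectories is small) to justify~(1). The rest of your argument then stands.
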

\begin{proof}
By Lemma~\ref{lem:norecurrence}, there exists a nonempty open subset $V\subset S^1$ such that the initial Stokes graph of the associated quadratic differential is locally finite for $\theta\in V$. This is the first condition of Definition~\ref{def:unobs}.

Since the set of turning points and the set of initial Stokes trees are both finite, one can find a nonempty open subset $V'$ in $V$ such that the initial Stokes trees do not pass through turning points for $\theta\in V'$. This is the second condition of Definition~\ref{def:unobs}.

We will treat 3, 4, 5 later.
\end{proof}

\subsection{Deformable Stokes trees}
Since we are interested in the generic behaviour of Stokes trees, we only treat deformable Stokes trees.
We start with then notion of family of preStokes curves.

\begin{definition}
Let $I$ be an interval in $S^1$. Let $J$ be an interval. A map $\gamma\colon I\times J\rightarrow C \bs D$. A type structure of $\gamma$ is an ordered pair $\frakt=(s_1, s_2)$ of continuous sections of $\gamma^*L$.
\end{definition}

\begin{definition}
Let $\gamma\colon I\times J\rightarrow C \bs D$ be an $I$-parametrized family of immersed curves. Let $(s_1, s_2)$ be its type structure.
We say $(\gamma, (s_1, s_2))$ is a family of preStokes curves of type $(s_1, s_2)$ if the restriction to each $\theta\in S^1$ is a preStokes curve for any $\theta\in S^1$.
\end{definition}

Now we can define families of open Stokes trees.
\begin{definition}
    Let $I$ be an interval in $S^1$. A family of Stokes trees is specified by the following:
    \begin{enumerate}
    \item A rooted tree $\cT$.
    \item A continuous map $\gamma_t\colon \cT\times I\rightarrow C\bs D$ such that the restriction to each edge of $\cT$ is a family of preStokes curves.
    \end{enumerate}
\end{definition}

\begin{definition}[Deformable Stokes tree]
A Stokes tree $\cT$ at $\theta$ is deformable if there exists an open neighborhood $I$ of $\theta$ and an $I$-family of Stokes trees whose restriction to $\theta$ is $\cT$.
\end{definition}

\subsection{Inductive construction}
In the following, we will construct unobstructed Stokes diagram inductively. We explain the setup and key lemmas.

\begin{definition}
\begin{enumerate}
    \item     Let $\frakS$ be a Stokes graph. Let $p$ be a non-cyclic ordered collision of open Stokes trees in $\frakS$. We say $p$ is scatterable if there exists an open Stokes tree $\cT$ which contains $p$ as an interior vertex. We say $p$ is unscatterable if it is not scatterable.
    \item We say $p$ is scattered if there exists an open Stokes tree $\cT$ in $\frakS$ which contains $p$ as an interior vertex. We say $p$ is unscattered if it is not scattered.
\end{enumerate}

\end{definition}
\begin{definition}
Let $p$ be a non-cyclic ordered collision of $\cT_1,..., \cT_l$. Suppose the type of $\cT_i$ on $p$ is $(s_i, s_{i+1})$. The one can adjoin a new preStokes curve of type $(s_1, s_{i+1})$ at $p$ to $\cT_1,..., \cT_l$. If the obtained one is a Stokes tree, we call it the scattering of $\cT_1,..., \cT_l$ at $p$.
\end{definition}

\begin{definition}
    Let $\frakS$ be an unobstructed Stokes graph modulo $E$. We set 
    \begin{equation}
        E_\frakS:=\min\lc E, m(\cT_1,..., \cT_l, p)\relmid \text{$p$ is an unscattered ordered collision}\rc.
    \end{equation}
\end{definition}

\begin{definition}
Let $\frakS$ be an $I$-family of Stokes graphs and $E$ be a real positive number. We say $\frakS$ is $E$-scatterble if the following holds: There exists a nonempty relatively compact open subset $J$ of $I$ such that 
\begin{enumerate}
    \item All the cyclic-ordered collisions of mass less than $E$ are scatterable at any $\theta\in J$.
    \item Any scatterings at the collisions with energy less than $E$ is deformable over $J$. 
\end{enumerate}
If $\frakS$ is $E$-scatterble, we define the new family of Stokes graphs $\frakS[E]$ by the union of $\frakS$ and the 
scatterings. 
\end{definition}

We start with $\frakS(0)$, the initial unobstructed diagram, which is a family over a nonempty open subset $I\subset S^1$. 
We will choose an increasing sequence $E_1<E_2<\cdots $ such that $\lim_{n\rightarrow \infty}E_n=\infty$. We inductively set
\begin{equation}
    \frakS(n):=\frakS(n-1)[E_{n-1}]
\end{equation}
when it is defined.
Now we state the key lemma.
\begin{lemma}\label{lemma:keylemma}
 For each $n$, there exists a nonempty open interval $I_n$ on which $\frakS(n)$ is scatterble at $E_n$ and unobstructed modulo $E_n$. Moreover, We can take $I_n$ in a way that
 \begin{enumerate}
     \item $I_n$ is a relatively compact subset of $I_{n-1}$, and
     \item $\lim_{n\rightarrow \infty}E_{\frakS(n)}=+\infty$.
 \end{enumerate}
\end{lemma}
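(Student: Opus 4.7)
The plan is to proceed by induction on $n$, using the associated quadratic differential $\widetilde\Phi$ to control the geometry of preStokes curves and to feed into a Gromov-type compactness argument at the very end. For the base case $n=0$, I would take $\frakS(0)$ to be the initial Stokes graph, which is unobstructed (in the sense of conditions 1--2 of Definition~\ref{def:unobs}) by Lemma~\ref{lem:initialunobs}, choose a provisional $E_0 > 0$, and restrict to a relatively compact open $I_0 \subset I$ on which $\frakS(0)$ is $E_0$-scatterable. The restriction is needed only to handle finitely many closed Stokes trees of mass $\leq E_0$ (each a real-analytic codimension-1 condition on $\theta$) and to guarantee that no cyclic collision appears at that energy.

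For the inductive step, suppose $\frakS(n-1)$ is unobstructed modulo $E_{n-1}$ over $I_{n-1}$ with $E_{\frakS(n-1)} \geq E_{n-1}$. I would first choose a target $E_n > E_{n-1}$ (to be specified through the compactness step below), and then produce a nonempty $I_n$ relatively compact in $I_{n-1}$ on which: (a) no cyclic ordered collision of mass $\leq E_n$ occurs, so every such collision is scatterable; (b) every resulting scattering is deformable in a neighborhood of each $\theta \in I_n$; and (c) the enlarged graph $\frakS(n) = \frakS(n-1)[E_{n-1}]$ is unobstructed modulo $E_n$. The key observation for (a) is that a cyclic ordered collision is precisely the datum of a 4d BPS state (closed Stokes tree), and for each combinatorial type the existence of such a tree at angle $\theta$ is a real-analytic codimension-1 equation in $\theta$; the finiteness of combinatorial types up to a given mass (established below) then ensures the bad locus in $I_{n-1}$ is a finite union of proper real-analytic subsets. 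Conditions (b) and (c) follow from continuous/real-analytic dependence of preStokes curves on $\theta$ in the absence of cyclic degenerations, combined with Lemma~\ref{lem:discreteintersection} to rule out accidental overlappings.

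The main obstacle is the divergence $\lim_{n \to \infty} E_{\frakS(n)} = +\infty$. My plan is a Gromov-type compactness statement for Stokes trees: in the flat cone metric $|\widetilde\Phi|$ on $\widetilde{C}$, every edge of a Stokes tree is a geodesic segment whose Euclidean length equals its mass, so controlling mass is controlling length. I would establish that for any $M > 0$ the set of combinatorial/topological types of Stokes trees with mass $\leq M$ is finite. This uses (i) the finiteness of branching values and of $D$ on $C$, (ii) a positive lower bound on the $|\widetilde\Phi|$-length of any preStokes segment joining consecutive collisions, which follows from completeness of the $|\widetilde\Phi|$-metric away from $D$ and the absence of closed and recurrent trajectories guaranteed by Lemma~\ref{lem:norecurrence}, and (iii) an isoperimetric-type bound on the valency of interior vertices coming from the local geometry of collisions.

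Given this finiteness, once $n$ is large enough, every Stokes tree of mass $\leq M$ will have been produced by the iterative scattering procedure, so $E_{\frakS(n)} \geq M$; since $M$ is arbitrary, $E_{\frakS(n)} \to +\infty$. This compactness claim is the substantive analytic content of the lemma, and the hard part will be pinning down the quantitative lower bound (ii) uniformly in $\theta \in I_n$: recurrence-free trajectories of $\widetilde\Phi$ are well-behaved by Strebel theory, but the absence of recurrent trajectories at nearby angles must be upgraded to a uniform lower bound on the length-between-collisions, which is where the strong GMN hypothesis and the pole structure of $\widetilde\Phi$ on $D$ (Definition~\ref{def:GMN}(2)) enter. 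The remainder of the argument is essentially bookkeeping the finitely many codimension-1 conditions at each stage and maintaining $I_n \subset I_{n-1}$ relatively compact by inductively shrinking the interval.
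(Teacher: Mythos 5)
Your overall inductive frame (shrink $I_n\subset I_{n-1}$ relatively compact at each stage, show cyclic collisions are a non-generic condition in $\theta$, use the associated quadratic differential to control geometry) matches the paper's induction part, though the paper makes the genericity step concrete via a \emph{moving lemma} that shows perturbing $\theta$ displaces every collision to the left in the flat coordinate — you invoke a ``real-analytic codimension-1'' heuristic without the transversality check that the moving lemma supplies. That is a smaller issue than the one in the mass-estimate part.

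The logic of your divergence argument is reversed relative to the paper's, and the reversal hides a genuine gap. You propose to first establish a Gromov-compactness statement (finitely many Stokes trees with mass $\le M$) and then deduce $E_{\frakS(n)}\to\infty$; but the paper proves the linear lower bound $E_{\frakS(n)}\ge (n+1)w_{\min}$ directly, and derives Gromov compactness (Theorem~\ref{thm:gromovcompact}) \emph{afterward} as a corollary. Your proposed route hinges on claim (ii), a uniform positive lower bound on the $|\widetilde\Phi|$-length of preStokes segments between consecutive collisions, and you yourself flag this as ``the hard part'' without a proof. As stated it is not true: a newly scattered curve at a collision $p$ of type $(ij)$ can meet a $(jk)$-curve passing arbitrarily close to $p$, producing an edge of arbitrarily small mass. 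Completeness of the $|\widetilde\Phi|$-metric and absence of recurrence control individual leaves of the foliation, not the spacing of intersection points between distinct foliations. What the paper's argument actually bounds from below is not individual edge lengths but the \emph{cumulative} mass $m(\cT,p)$ of deeper and deeper scatterings: it fixes neighborhoods $U_v\subset V_v$ around each turning point $v$, defines the constants $d(U_v)$, $d(V_v)$, $d_v$ from the initial graph $\frakS(0)$ only (hence uniform in $n$ and in $\theta\in I_0$), sets $w_{\min}$ to be their minimum, and then runs a case analysis on where a new collision of $\frakS(n)$ can sit — outside $U_v$, inside $U_v$ with root emanating from outside $V_v$, or inside $V_v$ — showing in each case that the cumulative mass exceeds $(n+1)w_{\min}$. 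Crucially the third case uses the inductive structure of the tree (the rooted edge produced inside $V_v$ must itself come from scattering trees of bounded ``depth''), not a length bound on that edge. Without replacing (ii) with something like the paper's cumulative bookkeeping, the compactness claim and hence the divergence of $E_{\frakS(n)}$ remain unproved.
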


As a corollary of this lemma, we can prove our main theorem.
\begin{proof}[Proof of Theorem~\ref{thm:main}]
Since $I_n$ is a relatively compact subset of $I_{n-1}$, $\bigcap_n I_n$ is not empty (Proof: Take one element from each $I_n$. It forms a bounded sequence. Hence having a convergent subsequence. For each $n$, the limit is contained in $\overline{I_{n-1}}$. Hence it is contained in $I_n$. Hence the limit is in $\bigcap_n I_n$). Since we can take $I_1$ arbitrary close to the initial $\theta$, we have the density result.

Now take $\theta\in \bigcap I_n$. Take any Stokes tree $\cT$. Then $m(\cT)$ is finite, and $\cT\in \frakS(n)$ if $m(\cT)<E_{\frakS(n-1)}$. Then the second part of the above lemma implies that $\bigcup_{n\in\bN} \frakS(n)$ is the desired unobstructed Stokes graph.
\end{proof}

\subsection{Proof of Lemma~\ref{lemma:keylemma}: Induction part}
In this section, we prove the first part of Lemma~\ref{lemma:keylemma}. Since we have already proved for $n=1$ in Lemma~\ref{lem:initialunobs}, we prove the following induction.
\begin{lemma}
    Suppose $\frakS(n)$ is $E_n$-scatterble and unobstructed modulo $E_n$ on an open subset $I_n$ of $I$. Then, for any $E>0$, there exists a relatively compact subset $I_{n+1}$ of $I_n$ such that $\frakS(n+1):=\frakS(n)[E]$ is unobstructed modulo $E$.
\end{lemma}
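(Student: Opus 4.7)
The plan is to verify each of the five conditions of Definition~\ref{def:unobs} for $\frakS(n)[E]$ on a suitably chosen relatively compact open subset $I_{n+1}\subset I_n$. The verification naturally splits into two steps: a \emph{finiteness} step showing that only finitely many Stokes trees with mass less than $E$ can appear upon scattering, and a \emph{genericity} step showing that each remaining bad configuration cuts out a real codimension $\geq 1$ locus in the parameter $\theta$.

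First I would establish the finiteness of Stokes trees of mass less than $E$ in $\frakS(n)[E]$. Every edge of every Stokes tree lifts, via the branched covering $\pi_{\widetilde{C}}\colon \widetilde{C}\to C\setminus D$, to a trajectory of the quadratic differential $\widetilde{\Phi}$, which is strongly GMN by hypothesis. After shrinking $I_n$ so that $\widetilde{\Phi}$ admits no closed or recurrent trajectories over the parameter interval (Lemma~\ref{lem:norecurrence}), classical Strebel theory on $\widetilde{C}$ guarantees that every trajectory has a definite minimal mass before terminating at a pole or returning to a turning point. Combined with the fact that scattering at a collision produces a new tree whose mass is \emph{strictly} larger than the sum of the masses of its inputs, since the freshly adjoined root edge picks up a strictly positive $\int \Re(e^{-2\pi i\theta}(\lambda_i-\lambda_j))$ away from the turning points, a finite induction on the number of interior vertices bounds, uniformly in $\theta$ on a small enough subinterval, the number of Stokes trees of mass below $E$. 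This is exactly the Gromov compactness input advertised in item~(4) of the introduction.

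With this finiteness in hand I would verify the remaining conditions by excising finitely many codimension $\geq 1$ loci from $I_n$. For each of the finitely many combinatorial tree types $\cT$ of mass below $E$: the existence of a \emph{closed} incarnation of $\cT$ at angle $\theta$ is cut out by the vanishing of a real-analytic function of $\theta$ (namely $\Im$ of the total period around $\cT$); the passage of an ordered collision through a turning point is likewise real codimension one, because Lemma~\ref{lem:discreteintersection} applied to the lifts on $\widetilde{C}$ rules out a persistent overlap between curves of different types; and the coincidence of a $(ji)$-curve with an $(ik)$-curve along a positive-length arc is again a codimension one condition, since by the same real-analyticity argument they must otherwise meet discretely. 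Removing the finite union of these closed real-analytic loci from $I_n$ leaves a nonempty open set, inside which I choose $I_{n+1}$ to be any nonempty relatively compact open subset. Local finiteness (condition~1) and finiteness of ordered collisions below mass $E$ (condition~5) then follow from the uniform bound on Stokes trees combined with the local finiteness of trajectories of $\widetilde{\Phi}$.

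The main obstacle is the mass estimate in the finiteness step. The subtlety is that near a collision point the newly adjoined edges can a priori be arbitrarily short, so one cannot control the number of iterated scatterings by simply counting edges; one must extract a definite mass quantum from the geometry of $\widetilde{\Phi}$ (pole orders, distances between turning points, etc.) and propagate this bound through the inductive scattering procedure. Without such a Gromov-type estimate, infinitely many trees could in principle accumulate below a fixed mass threshold, and both the local finiteness and the whole inductive scheme would collapse.
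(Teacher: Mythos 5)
Your overall structure (finiteness of new Stokes trees, then excising bad loci) matches the paper's, but you take a harder route on the finiteness step and thereby import an obstacle the paper does not face here.

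On finiteness: the paper's Claim~\ref{cl:finitecollision} does \emph{not} prove a uniform mass quantum for trees of mass less than $E$. It proves something weaker and more direct: $\frakS(n)$ has finitely many preStokes curves (by induction), the pole order of $\widetilde\Phi$ at $D$ (strongly GMN) forces $m(\cT,p)\to\infty$ as $p\to D$ along any rooted edge, so all collisions of mass $<E$ are trapped in a compact set $C\setminus U_D$, and Lemma~\ref{lem:discreteintersection} makes them discrete there, hence finite. This uses pole order and real-analyticity only. The uniform lower bound $w_{\min}$ that you flag as ``the main obstacle'' is precisely what the paper establishes \emph{later}, in the separate ``Mass estimate part'' of Lemma~\ref{lemma:keylemma}, to show $E_{\frakS(n)}\to\infty$; it is not needed to prove the present lemma. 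By trying to bound the number of trees of mass $<E$ via a per-scattering mass quantum plus an induction on interior vertices, you introduce a genuine gap that the paper's simpler route avoids. Also a minor slip: your ``freshly adjoined root edge picks up a strictly positive integral'' misattributes the mass increase; the root edge of a tree does not count toward its mass, and the increase actually comes from the old root segments up to the collision point, which become non-rooted edges of the scattered tree. Moreover ``classical Strebel theory guarantees a definite minimal mass'' is not a standard statement — it is exactly the quantitative estimate the paper proves later with the constants $d(U_v)$, $d(V_v)$, $d_v$.

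On genericity: your codimension-$\geq 1$ argument is in the right spirit, but the paper uses a more concrete Moving Lemma — perturbing $\theta$ shifts every Stokes tree consistently to one side, which not only avoids the bad locus but \emph{resolves} a cyclic ordered collision into scatterable pieces with controlled combinatorics (cf.\ Figure~\ref{Figure}). A bare codimension count doesn't by itself give this directional control, so you still need something like the moving lemma to know the new configuration is of the expected type. Finally, the paper's proof also addresses generic deformability of the new scatterings (Lemma~\ref{lem:deform}) — this is needed so that $\frakS(n+1)$ remains a \emph{family} over $I_{n+1}$ and the induction can continue — and your proposal omits this point.
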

\begin{proof}
    
We first deal with the generic unobstructedness.
For this, we first show the following finiteness.
    \begin{claim}\label{cl:finitecollision}Let $E$ be a positive number.
        The number of the collisions in $\frakS(n)$ of mass less than $E$ is finite.
    \end{claim}
    \begin{proof}By induction, we can see that the number of involved preStokes curves involved in $\frakS(n)$ is finite. 
        For a $\cT\in \frakS(n)$, take a sequence of points $\{p_i\}$ on the rooted edge such that $\lim_{i\rightarrow \infty}p_i$ going to $D$. 
        By the assumption on the pole order (strongly GMN assumption), we have $\lim_{i\rightarrow \infty}m(\cT, p_i)=\infty$ for any $\cT\in \frakS(n)$. In particular, there exists a neighborhood $U_D$ of $D$ such that 
        \begin{equation}
            \lc p\in C \relmid m(\cT, p)<E, \cT\in \frakS(n)\rc\subset C\bs U_D.
        \end{equation}
        This implies that the collisions of mass less than $E$ is contained in $C\bs U_D$. Since such collisions are discrete by Lemma~\ref{lem:discreteintersection}, we have the desired finiteness.
    \end{proof}
    
    We then add finitely many new Stokes trees. Now we would like to show we can achieve the unobstructedness by perturbing $\theta$. We would like to check from 1 to 5 of Definition~\ref{def:unobs}.

    \begin{enumerate}
\item There are no recurrent trajectories for the lifted quadratic differentials by Lemma~\ref{lem:norecurrence}, and the added Stokes trees are finite by Claim~\ref{cl:finitecollision}. Hence locally finite.
    \item Since the number of turning points are finite by the compactness of $C$, by perturbing slightly, all the added new Stokes trees can avoid turning points. So, any ordered collision happens outside the set of turning points.
    \item By Claim~\ref{cl:finitecollision}, the number of collisions of mass less than $E$ is finite. We will see the absence of cyclically ordered collisions with energy less than $E$ in the following lemma.
    \item Again, by Claim~\ref{cl:finitecollision}, the number of collisions of mass less than $E$ is finite. We can also avoid collisions of preStokes curves of different types by the following lemma.
    %\item The complement of the union of $\theta$-Stokes curves is open and contractible. \textcolor{blue}{必要ない？}
    \item The set of ordered collisions with energy less than $E$ is discrete in $C\bs D$, since the intersections are discrete by Claim~\ref{cl:finitecollision}.
\end{enumerate}

If there are no cyclic ordered collision, we have done. So, suppose there are several cyclic ordered collisions. The number of them is finite by Claim~\ref{cl:finitecollision}. In this case, we can resolve them by the following claim.

\begin{claim}[Moving lemma]
    Take a point on a smooth part of a Stokes tree in $\frakS_{n,\theta}$. Take a small disk neighborhood. Then the edge divides the disk into two half disks. Then there exists $\epsilon$ such that the corresponding tree in $\frakS_{n,\theta+\eta}$ intersect with the disk on the left half disk for any $0<\eta<\epsilon$.
\end{claim}
\begin{proof}
    We prove by induction. If the tree has no internal vertices, the statement is clear. Suppose $\cT$ be a deformable Stokes tree with some internal vertices. The vertex $p$ next to the root vertex formed by a collision of some $k$ preStokes curves. We denote the corresponding vertex after the perturbation by $\eta$ by $p^\eta$. By the perturbation of $\theta$ by $\eta$, the collision point moves to left with respect to any preStokes curves involved in the collision. In other words, the collision point $p$ satisfies $\Image z_i(p^\eta)<\Image z_i(p)$. Here $z_i$ is the coordinate defined by the integration of the differential defining the corresponding preStokes curve. The root edge curve defines the coordinate $z=\sum z_i$. Hence $\Image z(p^\eta)<\Image z(p)$. Hence the new curve also moves left after the perturbation.
\end{proof}
    Now we can collapse the cases 3 and 4 by the above moving lemma as in Figure~\ref{Figure}.
    This completes the proof of the generic unobstructedness.

    We next show the generic deformability. The following lemma is enough:

\begin{lemma}\label{lem:deform}
    Let $\cT_1,..., \cT_n$ be an $I$-family of Stokes trees. There exists a nonempty open subset $J$ of $I$ satisfying the following: For any $E>0$, the scatterings at the collisions with mass less than $E$ are deformable over $J$.
\end{lemma}
\begin{proof}
The deformability can fail if (1) the collisions form or collapse after perturbations, or (2) the new Stokes curve emanating from the collision does not form a Stokes tree. (1) is OK, since the formation and collapsing of collisions are non-generic phenomena. For (2), it is easy to avoid the situation by using the associated quadratic differential.
\end{proof}
This completes the proof.
\end{proof}

\begin{figure}
\begin{center}
        \includegraphics[scale=0.5]{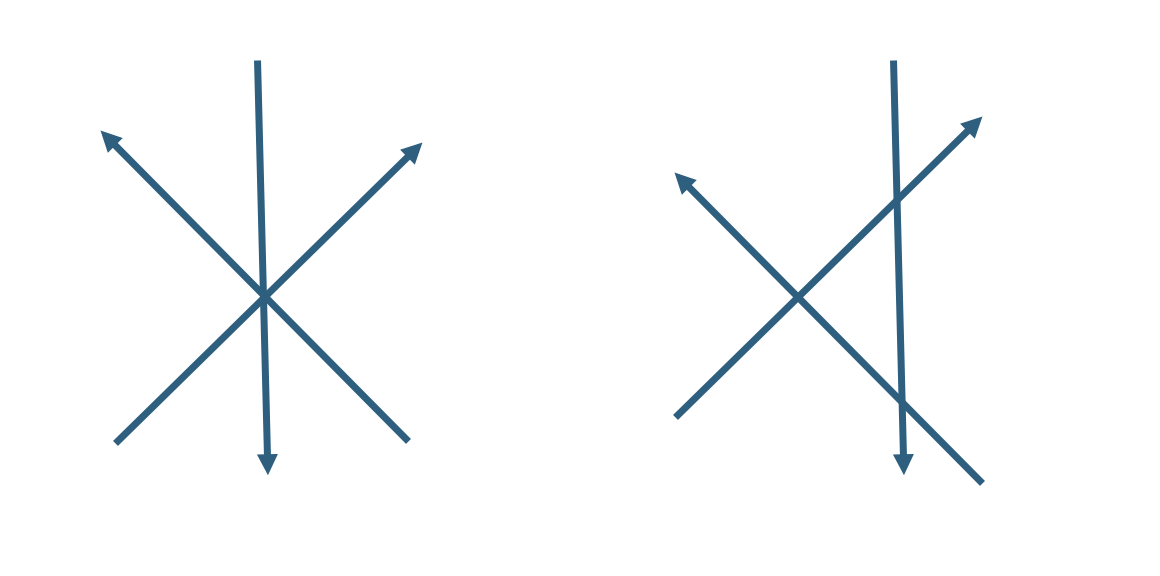}
\end{center}
\caption{Left: Original collision, Right: Perturbed collision} 
\label{Figure}
\end{figure}

\subsection{Proof of Lemma~\ref{lemma:keylemma}: Mass estimate part}
In this section, we prove the latter part of Lemma~\ref{lemma:keylemma}. What we will show is that there exists $m_{\min{}}>0$ which does not depend on $n$ and satisfies $n m_{\min{}}\leq E_{\frakS(n-1)}$. Then the desired statement follows.

We start with some preliminary notions.
For a while, we consider about $\frakS(0)$ paramatrized by $I_0$. In the following discussion, we will discuss uniformly over $\theta\in I_0$. For a branching value $v$ of type $(s_1, s_2)$, let $T$ be the union of the (three) preStokes curves emanating from $v$ of type $(s_1, s_2)$. If it is necessary we retake $I$ sufficiently small, so that we can take a compact neighborhood $U_v$ of $v$ uniformly over $I$  such that 
\begin{enumerate}
    \item $U_v$ does not contain any other branching values, %\textcolor{blue}{重なっている場合はtypeがintersectionしない場合しかありえないので、}
    $U_v$ does not intersect with the Stokes trees of type $(i,s_1), (s_2,i)$ in $\frakS(0)$ for any $i$ by 4 of Definition~\ref{def:unobs}.
    \item $U_v\cap T$ is topologically a trivalent tree with a single interior vertex and  $U_v\cap T$ has three boundaries $z_1, z_2, z_3$.
\end{enumerate}

We set 
\begin{equation}
  d(U_v):=\min_i\lc \Re\lb\int_v^{z_i}(\lambda_1-\lambda_2)dz\rb\rc>0
\end{equation}

We also take a small neighborhood $V_v$ of $\overline{U_v\cap T}$ such that the closure of $V_v$ does not contain any other branching values and does not intersect with Stokes trees of $(i,1)$ or $(2,i)$ in $\frakS(0)$. Consider the set $L_{v}$ of connected subsets of preStokes curves of type $(i,1)$ or $(2,i)$ such that each $l\in L_{v}$ starts from a point outside $V_v$ and ends at a point in $U_v\cap T$.
We set
\begin{equation}
    d(V_v):=\inf_{l\in L_{v}}\lc \begin{cases}
    \Re\lb\int_{l}(\lambda_i-\lambda_1)dz\rb&\text{ if $l$ is of type $(i,1)$}\\
    \Re\lb\int_{l}(\lambda_2-\lambda_i)dz\rb&\text{ if $l$ is of type $(2,i)$}
    \end{cases}
    \rc>0.
\end{equation}

Let $w$ be a branching value or an ordered collision of initial Stokes curves which is different from $v$. Let $L_{vw, i, j}$ be the set of  preStokes curves of type $(ij)$ emanating from $w$ passing through $V_v$. We set
\begin{equation}
d_v:=\inf_{w\neq v}\inf_{i,j}\inf_{l\in L_{vw,i,j}}\inf_{x\in V_v}\lc \Re\lb\int_{w}^{x}(\lambda_i-\lambda_j)dz\rb\text{ if $l$ is of type $(ij)$ and the integral is along $l$}\rc.
\end{equation}
This is positive, since the set of turning points and the ordered collisions is discrete and $w$ is outside the closure of $V_v$.

We choose $U_v$ and $V_v$ for each turning point and set 
\begin{equation}
    w_{min}:=\min_v\lc d(U_v), d(V_v), d_v\rc,
\end{equation}
which is again a positive real number. Not that this number is taken uniformly over $I$.

We now set
\begin{equation}
    E_n:=(n+2)w_{min}.
\end{equation}

Now we prove the following:
\begin{lemma}
    We have $(n+1)w_{min}\leq E_{\frakS(n)}$ for any $n$.
\end{lemma}
\begin{proof}
Since $E_{\frakS(0)}\geq d_v\geq w_{min}$, it holds for $n=0$.
    We assume the statement holds true for $n-1$. We classify the unscattered point of $\frakS(n)$ into the followings:
    \begin{enumerate}
    \item Collisions of trees in $\frakS(n-1)$ with mass greater than or equal to $(n+1)w_{min}$.
\item Collisions formed by new Stokes trees in $\frakS(n)$.
    \end{enumerate}
    For the 1st one, the statement is satisfied. We further classify the 2nd one as follows:
    \begin{enumerate}
        \item If a new collision happens outside $U_v$, then the collision carries mass greater than $E_{\frakS(n-1)}+d(U_v)>(n+1)w_{\min{}}$.
        \item Suppose a new collision happens inside $U_v$ between an initial curve and a tree. If the rooted edge of the tree emanates outside $V_v$, it carries mass greater than $E_{\frakS(n-1)}+d(V_v)>(n+1)w_{\min{}}$.
        \item Suppose a new collision happens inside $U_v$ between an initial curve and a tree. If the rooted edge of the tree emanates inside $V_v$, the rooted edge is created by a scattering between a Stokes tree in $\frakS(n-2)$ and a Stokes tree in $\frakS(k)$ with $k\geq 1$. Hence it carries mass greater than $(n-1)w_{min}+(1+1)w_{min}=(n+1)w_{min}$. 
    \end{enumerate}
    This completes the proof.
\end{proof}
This completes the proof of Lemma~\ref{lemma:keylemma}.

\subsection{The finiteness of closed Stokes trees}
As a variant of the above argument, we prove the following theorem.
\begin{theorem}[Gromov compactness]\label{thm:gromovcompact}
For any $\theta\in S^1$ without recurrent trajectories and any $M>0$, the number of closed Stokes trees with mass less than $M$ is finite.
\end{theorem}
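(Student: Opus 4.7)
The plan is to recycle the mass-estimate part of Lemma~\ref{lemma:keylemma}, adapted to the single angle $\theta$ and to closed rather than open Stokes trees. First I would re-run the same inductive construction $\frakS(0)\subset\frakS(1)\subset\cdots$ at the fixed $\theta$. No family-wise perturbation is needed here: the absence of recurrent trajectories alone is enough for Claim~\ref{cl:finitecollision} to give, at this fixed $\theta$ and for any $E>0$, that the set of collisions in $\frakS(n)$ of mass less than $E$ is finite. We do not need the induction to terminate in an unobstructed diagram; only the layered enumeration of preStokes curves is used.

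Second, I would assign to each closed Stokes tree $\cT$ a level $n$, namely the smallest $n$ such that every edge of $\cT$ is a segment of a preStokes curve appearing by step $n$ of the construction (initial trajectories for $n=0$; scatterings at lower-level collisions for $n\geq 1$). The key claim is the mass lower bound $m(\cT)\geq (n+1)w_{\min}$, with the same positive constant $w_{\min}$ as in the proof of Lemma~\ref{lemma:keylemma}. This is a direct translation of the inductive inequality $(n+1)w_{\min}\leq E_{\frakS(n)}$: by the type-cycle condition at an interior vertex in the definition of a closed Stokes tree, every interior vertex $v$ of $\cT$ is (after appropriate orientation of the edges) a cyclic ordered collision, hence intrinsically unscattered; the colliding subtrees at $v$ are precisely the connected components of $\cT\setminus\{v\}$, so their masses sum to $m(\cT)$, and the mass estimate yields $m(\cT)\geq (n+1)w_{\min}$ whenever $v$ sits at level $n$. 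For $m(\cT)<M$ the level is therefore bounded by $N:=\lfloor M/w_{\min}\rfloor$, and across the finitely many levels $0,1,\ldots,N$ only finitely many collisions of mass less than $M$ occur, so the interior vertices of $\cT$ lie in a finite set. The leaves lie in the finite set of turning points, and the cyclic valency constraint bounds the combinatorial type of $\cT$; hence $\cT$ is drawn from a finite family.

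The principal obstacle is the precise identification between a closed Stokes tree and an iterated cyclic-collision configuration of the inductive construction. For a star-shaped closed tree with a single interior vertex this is immediate, with the vertex being a cyclic collision of initial trajectories in $\frakS(0)$; for trees with several interior vertices one has to view each subtree of $\cT$ rooted at an interior vertex as an analogue of an open Stokes tree whose root lies in the turning-point set rather than in $D$, and to verify that the mass constants $w_{\min}$, $d(U_v)$, $d(V_v)$, $d_v$ from Lemma~\ref{lemma:keylemma} still control such subtrees. Once this bookkeeping is in place, the mass estimate applies verbatim and the finiteness follows.
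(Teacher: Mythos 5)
Your proposal is correct and takes essentially the same approach as the paper: run the inductive construction $\frakS(0)\subset\frakS(1)\subset\cdots$ at the fixed angle $\theta$ (scattering only at scatterable collisions), reuse the mass estimate $(n+1)w_{\min}\le E_{\frakS(n)}$, and invoke Claim~\ref{cl:finitecollision} to bound the number of closed trees of mass below $M$. The ``level'' you assign to a closed Stokes tree is a more explicit phrasing of the paper's assertion that every closed Stokes tree of mass less than $M$ already appears in $\frakS(n)$ once $E_{\frakS(n)}>M$, and the ``principal obstacle'' you flag --- identifying the colliding subtrees at an interior vertex of $\cT$ with open Stokes trees produced by the construction --- is a genuine bookkeeping point that the paper also leaves implicit rather than resolving in a separate argument.
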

We can run the same inductive argument to construct the Stokes graph in the presence of closed Stokes trees. Namely, we only add new preStokes curves to scatterble collisoins. As in the above proof, we obtain the following:
\begin{lemma}
    We have $(n+1)w_{min}\leq E_{\frakS(n)}$ for any $n$.
\end{lemma}

\begin{proof}[Proof of Theorem~\ref{thm:gromovcompact}]
    For any $M$, we can take sufficiently large $n$ such that $E_{\frakS(n)}>M$. Then any closed Stokes tree with mass less than $M$ is contained in $\frakS(n)$. By Claim~\ref{cl:finitecollision}, there are only finitely many closed Stokes trees in $\frakS(n)$. This completes the proof.
\end{proof}

\section{Wall-crossing factors and sheaf quantization}
For a Stokes graph/WKB spectral network, one can assign data of multiplicity, called wall-crossing factors (or BPS-multiplicity). We can use our inductive procedure to construct such data in a canonical way. Moreover, the resulting wall-crossing factors give a sheaf quantization. 
\subsection{Definition}
Let $\bK$ be a field.
Let $M$ be a real manifold and $\bR_t$ be the real line with the standard coordinate $t$. We consider the discrete group of the real numbers $\bR^\delta$ and let it act on $\bR_t$ by addition.

We denote the derived category of equivariant $\bK$-module sheaves by $\Sh^{\bR^\delta}(M\times \bR_t)$. We define the subcategory $\Sh_{\tau\leq 0}^{\bR^\delta}(M\times \bR_t)$ by the microsupport condition $\SS(\cE)\subset \{\tau\leq 0\}$ where $\tau$ is the cotangent coordinate of $\bR_t$. We set
\begin{equation}
     \Sh_{\tau> 0}^{\bR^\delta}(M\times \bR_t):=  \Sh^{\bR^\delta}(M\times \bR_t)/\Sh_{\tau\leq 0}^{\bR^\delta}(M\times \bR_t)
\end{equation}
This is a version in \cite{WKBkuw} of the category introduced by Tamarkin~\cite{Tam}. The feature of this version is the natural enrichment over $\Lambda_0$. 

For an object $\cE\in  \Sh_{\tau> 0}^{\bR^\delta}(M\times \bR_t)$, the positive microsupport $\SS(\cE)\cap \{\tau>0\}$ is well-defined. We set
\begin{equation}
    \mu supp(\cE):=\rho(\SS(\cE)\cap \{\tau>0\})
\end{equation}
where $\rho\colon T^*M\times \lc(t, \tau)\relmid \tau>0\rc\rightarrow T^*M; (x, \xi, t, \tau)\mapsto (x, \xi/\tau)$. 
\begin{definition}
    For a smooth Lagrangian $L\subset T^*C$, we say $\cE\in\Sh_{\tau> 0}^{\bR^\delta}(M\times \bR_t)$ is a sheaf quantization of $L$ if $\musupp(\cE)=L$.
\end{definition}
Given a brane structure $\mathbf{b}$ of $L$, one can further speak about a sheaf quantization of $(L, \mathbf{b})$. Namely, associated to a brane structure of $\mathbf{b}$, we can associate a local system on $L$ to a sheaf quantization $\cE$ of $L$. We say $\cE$ is a sheaf quantization of $(L, \mathbf{b})$ if the local system is the rank 1 constant sheaf.

\begin{conjecture}\label{conj:FukayaaSheaf}
    There exists an infinitesimally wrapped Fukaya category $Fuk(T^*M)$ of nonexact Lagrangians in $T^*M$ with a natural embedding
    \begin{equation}
        Fuk(T^*M)\hookrightarrow \Sh_{\tau> 0}^{\bR^\delta}(M\times \bR_t).
    \end{equation}
\end{conjecture}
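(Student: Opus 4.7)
The plan is to generalize the Nadler--Zaslow correspondence from the exact setting to the non-exact one, using the Tamarkin variable $t$ to encode the multi-valued primitive of the Liouville form along a Lagrangian. Given a nonexact Lagrangian $L\subset T^*M$ with brane data $\mathbf{b}$, I would first construct its conic lift $\widetilde L\subset T^*(M\times \bR_t)$ by setting
\begin{equation}
    \widetilde L:=\lc (x,\tau\xi,t,\tau)\in T^*(M\times \bR_t)\relmid \tau>0,\ (x,\xi)\in L,\ t+f(x,\xi)=0\rc
\end{equation}
where $f$ is a local primitive of $\lambda|_L$; the failure of $f$ to be globally single-valued is absorbed by the $\bR^\delta$-action shifting $t$. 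This makes $\widetilde L$ a conic $\bR^\delta$-invariant subset of $T^*(M\times \bR_t)$, and a sheaf quantization of $L$ is by construction a sheaf quantization of $\widetilde L$, so the target of the putative functor is meaningful.

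On the Fukaya side, I would adopt the Fukaya--Oh--Ohta--Ono formalism: the hom-spaces $CF^*(L_0,L_1)$ carry a curved $A_\infty$-structure over $\Lambda_0$, with operations $m_k$ counting pseudoholomorphic disks weighted by $T^{\mathrm{area}}$, and infinitesimal wrapping realized by the time-$\epsilon$ Hamiltonian flow of $|\xi|^2$. On the sheaf side, the category $\Sh_{\tau>0}^{\bR^\delta}(M\times \bR_t)$ is enriched over $\Lambda_0$ through the $\bR^\delta$-action: translation by $s\in \bR$ acts as multiplication by $T^s$, which is the feature of \cite{WKBkuw} highlighted in the paper. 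The desired functor sends $(L,\mathbf{b})$ to its sheaf quantization on objects and sends an intersection point $p\in L_0\cap L_1$ to a microlocal generator weighted by the Liouville action at $p$.

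Three technical steps then remain. First, produce sheaf quantizations of sufficiently generic nonexact Lagrangians; the spectral-network construction of the present paper realizes this for Lagrangians of the form $L_\Phi$ and suggests a general local recipe in terms of Stokes data. Second, match Floer generators with stalk-level $\mathrm{Hom}$-generators in the Tamarkin category by a microlocal Morse-theoretic argument in the spirit of Nadler--Zaslow and Kashiwara--Schapira, now with $T$-weights appended. Third, verify that the $A_\infty$-operations agree term by term; under the folklore correspondence mentioned in the introduction, pseudoholomorphic disks with boundary on $L_0\cup\cdots\cup L_k$ should correspond exactly to Stokes-tree configurations, which makes the matching combinatorial rather than analytic.

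The main obstacle is the obstruction theory. For non-exact $L$ the curvature $m_0$ is generically nonzero, so $(L,\mathbf{b})$ is naturally an object of a \emph{curved} $A_\infty$-category, and it descends to an honest object of a triangulated category only after a choice of bounding cochain. The sheaf side must therefore be equipped with a compatible curved enrichment of $\Sh_{\tau>0}^{\bR^\delta}(M\times \bR_t)$, and the functor must match Maurer--Cartan loci on the two sides---a moduli-theoretic comparison substantially beyond existing work. Here the Gromov compactness of Theorem~\ref{thm:gromovcompact} is not cosmetic: it is exactly the finiteness that makes both the Novikov sum over disks and the inductive sum over Stokes trees converge, so the mass-filtration argument driving the main theorem of this paper is an essential ingredient of any proof rather than a formal analogy.
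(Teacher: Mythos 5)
The statement you are ``proving'' is labeled a \emph{conjecture} in the paper, and the paper contains no proof of it. The author explicitly notes only that it ``was checked for integral Lagrangians in \cite{KPS}'' and thereafter uses it purely as a hypothesis (``if the above conjecture holds\dots''). So there is no internal proof to compare against; the intended status is exactly what you arrive at by the end of your write-up, namely that the statement is open.

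Your outline is a reasonable sketch of the folklore route: lift $L$ conically via a local primitive of $\lambda|_L$ with the $\bR^\delta$-action absorbing multivaluedness, match Floer generators with microlocal $\Hom$-generators weighted by the Novikov variable, and compare $A_\infty$-operations via the disk/Stokes-tree correspondence. But as you yourself flag, the decisive step --- equipping $\Sh^{\bR^\delta}_{\tau>0}(M\times\bR_t)$ with a curved enrichment and matching Maurer--Cartan loci with those of the (generally obstructed) nonexact Floer theory --- is not carried out, and no existing result supplies it. A proposal that concludes by naming an unresolved obstruction ``substantially beyond existing work'' is not a proof; it is a correct diagnosis of why the author leaves this as a conjecture. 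In short, you have not supplied a proof of the statement, but you also have not missed anything the paper contains: the paper contains nothing here to miss, and your assessment of the gap is accurate.
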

This conjecture was checked for integral Lagrangians in \cite{KPS}. 

Now let us assume $M$ be a Riemann surface $C$. Let $L$ be a holomorphic Lagrangian in  $T^*C$. By the Solomon--Verbitsky theorem~\cite{SolomonVerbitsky}, one can consider $e^{2\pi i\theta}\cdot L$ as an object of $Fuk(T^*M)$ for generic $\theta$.
Hence, if the above conjecture holds,
one can deduce the following:
\begin{conjecture}
    For generic $\theta$, the Lagrangian $e^{2\pi i\theta}\cdot L$ is sheaf quantizable.
\end{conjecture}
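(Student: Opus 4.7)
The plan is to derive this conjecture as a direct consequence of two ingredients: the Solomon--Verbitsky theorem applied to the holomorphic Lagrangian $L \subset T^*C$, and the conjectural embedding of the infinitesimally wrapped Fukaya category into sheaf quantizations stated in Conjecture~\ref{conj:FukayaaSheaf}. The argument is already sketched in the paragraph preceding the conjecture; my task is to make the chain of implications and the hypothesis-checking explicit.

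First I would apply Solomon--Verbitsky. The cotangent bundle $T^*C$ (restricted to the open subset where the spectral curve lives) carries a natural hyperk\"ahler structure via a Feix--Kaledin type construction, and the holomorphic Lagrangian $L$ becomes a special Lagrangian with respect to the rotated complex structure for generic $\theta \in S^1$. Solomon--Verbitsky then guarantees that $L_\theta := e^{2\pi i \theta}\cdot L$ is unobstructed as a Floer-theoretic object for all $\theta$ in a dense (and in fact cogeneric) subset of $S^1$. Equipped with the brane structure induced from a fixed brane structure $\mathbf{b}$ on $L$, the rotated Lagrangian $L_\theta$ therefore defines an object of $Fuk(T^*C)$ for such $\theta$. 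Second, I would invoke Conjecture~\ref{conj:FukayaaSheaf}: the conjectured embedding $Fuk(T^*C) \hookrightarrow \Sh_{\tau>0}^{\bR^\delta}(C\times \bR_t)$ sends this Fukaya object to an object $\cE$ of the sheaf quantization category, and by the microsupport compatibility built into the statement of the embedding, one has $\musupp(\cE) = L_\theta$ with trivial associated local system. This is precisely the defining property of a sheaf quantization of $(L_\theta, \mathbf{b})$.

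The principal obstacle is of course Conjecture~\ref{conj:FukayaaSheaf} itself, which has only been established for integral Lagrangians in \cite{KPS}; the construction of the embedding in the general non-exact setting remains open. A secondary point that deserves care is verifying that the hypotheses of Solomon--Verbitsky apply to our meromorphic, non-compact setting: $L$ may be wild near the poles $D$, but the strongly GMN assumption together with the Gromov compactness result (Theorem~\ref{thm:gromovcompact}) should provide the tameness needed to regard $L$ as a well-defined object of a suitably wrapped Fukaya category. I note that the unconditional analogue (density rather than genericity), stated as a Theorem in the introduction, is already established by the main result of this paper: the brane structure produces an initial wall-crossing datum, the main existence theorem propagates it to a full compatible Stokes graph, and the wall-crossing factors are then glued into a sheaf quantization following the construction of \cite{WKBkuw}.
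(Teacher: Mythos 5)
Your proposal correctly identifies the two strands surrounding this conjecture, but the emphasis is inverted relative to the paper. The derivation you put front and center --- Solomon--Verbitsky supplying unobstructedness of $L_\theta$ for a dense set of $\theta$, then Conjecture~\ref{conj:FukayaaSheaf} carrying the Fukaya object into $\Sh_{\tau>0}^{\bR^\delta}(C\times\bR_t)$ with microsupport $L_\theta$ --- is exactly what the paper offers in the paragraph preceding the statement, but there it is presented only as the \emph{motivation} for writing the conjecture down, precisely because Conjecture~\ref{conj:FukayaaSheaf} is open outside the integral case. The paper's actual proof is the one you mention almost as an afterthought in your final two sentences: the text immediately following the conjecture reads ``In the following, we prove this conjecture for spectral curves, without assuming Conjecture~\ref{conj:FukayaaSheaf},'' and the substance is the unconditional construction spanning the rest of that section and the next --- the initial wall-crossing datum from a brane structure, Theorem~\ref{thm:constructionofWallcrossing} propagating it to a compatible wall-crossing data on the Stokes graph produced by Theorem~\ref{thm:main}, and the gluing of the sheaves $S_D^E$ across walls by $\id + E_{ij}T^{m(\cT)}a(\cT)$ to produce $S = \lim S^E$ with $\musupp(S)=L_\theta$. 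This route proves the statement for strongly GMN spectral curves and a \emph{dense} (rather than generic/full-measure) set of $\theta$, a weakening you already flag. Your secondary worry about applying Solomon--Verbitsky near the poles $D$ is a legitimate gap in the conditional route as you present it; the paper sidesteps it precisely by not relying on Solomon--Verbitsky for the proof. In short: you have the right ingredients, but the paper's proof is the wall-crossing construction, not the Solomon--Verbitsky argument, and your writeup should lead with the former.
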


In the following, we prove this conjecture for spectral curves, without assuming Conjecture~\ref{conj:FukayaaSheaf}.

\subsection{Modification by wall-crossing factors}
For the discussion in this section, we first recall the definition of brane structures.

Let $L$ be a strongly GMN Lagrangian submanifold.
\begin{definition}
    A brane structure of $L$ is a pair of a rank 1 $\bK$-local system $\cL$ on $L$ and a spin structure $\sigma$.
\end{definition}
In the following, we give a spectral-network-friendly form of a brane structure. 

We first note that a spin structure gives a double covering $\widetilde S^*L$ of the co-circle bundle $S^*L$ of $L$. We pull back $\cL$ to $\widetilde S^*L$, then push-forward to $S^*L$. We take the part $\cL^\sigma$ where the fiber monodromy is $-1$. By this procedure, the set of local systems on $L$ and the set of local systems on $S^*L$ whose fiber monodromy is $-1$ are one-to-one, which depends on $\sigma$.

\begin{definition}
A wall-crossing data for a Stokes graph is a $\bK$-valued function $\alpha$ on the set of open Stokes trees in the graph.
\end{definition}
Consider an unobstructed angle $\theta$. Take $c>0$.
Consider wall-crossing data $\alpha$ on $\frakS_c$. We subtract the collision points of $\frakS$ and the branching values and the poles from $C$ and denote it by $C^\circ_\frakS$.

Let $\cT$ be a Stokes tree with the rooted edge of type $(12)$. Let $\gamma$ be a path in $C^\circ_\frakS$ crossing the root edge of $\cT$ at one point. We set $p=\gamma(0), q=\gamma(1)$. We denote the lift of $p$ (resp. $q$) to the sheet $i$ by $p_i$ (resp. $q_i$). Let $\gamma_i$ be the lift of $\gamma$ to sheet (1). We denote the associated Gauss lift by $\widetilde \gamma_i\subset S^*L$.

We associate another path as follows: Let $p'$ be the point where $\gamma$ and $\cT$ meets. For some metric $d$ of $C$, we take an $\epsilon$-neighborhood of $\cT$, and remove the part of the neighborhood ascending to $p'$. Then we take a path gamma detouring the boundary of the neighborhood. We lift this path to the associated Gauss path $\widetilde \gamma_\cT$ on $S^*L$. Note that one can make $\widetilde \gamma_\cT(0)=\widetilde \gamma_1 (0), \widetilde \gamma_\cT(1)=\widetilde \gamma_2 (1)$.

Now we consider the following matrix:
\begin{equation}
 M((\cT, \alpha), \gamma)_c:=
    \begin{pmatrix}
       1&  \alpha_\cT T^{m(\cT)}\cL^\sigma(\widetilde \gamma_\cT)\\
        0&1
    \end{pmatrix}\colon \lb \cL_{\widetilde \gamma_1(0)}\oplus \cL_{\widetilde \gamma_2(0)}\rb \otimes_\bK\Lambda_0/T^c\Lambda_0 \rightarrow \lb  \cL_{\widetilde \gamma_1(1)}\oplus \cL_{\widetilde\gamma_2(1)}\rb\otimes_\bK\Lambda_0/T^c\Lambda_0.
\end{equation}
For an arbitrary path $\gamma$ on $C$, let $\cT_1,..,\cT_n$ be the trees crossing with $\gamma$ in the chronological order. We set
\begin{equation}
M((\frakS_c, \alpha),\gamma):=M((\cT, \alpha_n), \gamma)_cM((\cT, \alpha_{n-1}), \gamma)_c\cdots M((\cT, \alpha_1),\gamma)_c.
\end{equation}
Now we get a new representation of $\Pi_1(C^\circ_\frakS)$ associated to $(\cL, \alpha)$. For any point $p$ which is a branching value or a interior vertex of a tree, we associate $M_p$ by 
\begin{equation}
    M_p:=\lim_{\gamma}M((\frakS_c, \alpha),\gamma)
\end{equation}
where $\gamma$ runs over shrinking circles encircling $p$. 
\begin{definition}
    We say $\alpha$ is compatible if $M_p=\id$ for any such $p$ and any $c>0$.
\end{definition}

\subsection{Initial wall-crossing data}
Let $\frakS_0$ be an unobstructed initial diagram, which consists of Stokes curves emanating from branching values, three for each branching point. 

\begin{definition}
    The canonical initial data assigns $1$ for each Stokes tree (curve) in $\frakS_0$.
\end{definition}

The initial wall-crossing data is indeed compatible at $c=0$. Around a branching point, we can take the following trivialization of the spin structure: For each Stokes curve, when crossing a Stokes curve, we have a transformation of matrix of the trivialization by
    \begin{equation}
         \begin{pmatrix}
        0 & 1 \\
        -1 & 0
    \end{pmatrix}.
    \end{equation}
    Then crossing six times, we have
    \begin{equation}
            \begin{pmatrix}
        0 & 1 \\
        -1 & 0
    \end{pmatrix}^6=
    \begin{pmatrix}
        0 & -1 \\
        -1 & 0
    \end{pmatrix}.
    \end{equation}
This corresponds to one winding around the fiber of $S^*C$. Then 
\begin{equation}
    \begin{pmatrix}
        1 & 1 \\
        0 & 1
    \end{pmatrix}
 \begin{pmatrix}
        0 & 1 \\
        -1 & 0
    \end{pmatrix}
        \begin{pmatrix}
        1 & 1 \\
        0 & 1
    \end{pmatrix}
 \begin{pmatrix}
        0 & 1 \\
        -1 & 0
    \end{pmatrix}
        \begin{pmatrix}
        1 & 1 \\
        0 & 1
    \end{pmatrix}
 \begin{pmatrix}
        0 & 1 \\
        -1 & 0
    \end{pmatrix}
    =    \begin{pmatrix}
        1 & 0 \\
        0 & 1
    \end{pmatrix}.
\end{equation}
Hence it is compatible.

\begin{theorem}\label{thm:constructionofWallcrossing}
    There exists a unique consistent wall-crossing data $\alpha$ whose restriction to $\frakS_0$ is the canonical initial data.
\end{theorem}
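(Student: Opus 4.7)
The plan is to construct $\alpha$ by induction along the mass filtration $\frakS(0)\subset \frakS(1)\subset\cdots$ of Section~\ref{section:proofofunobs}, assigning coefficients order-by-order in the Novikov parameter $T$. The base case is the canonical initial data on $\frakS_0=\frakS(0)$, whose compatibility at branching values has been verified in the preceding subsection and for which no collision condition applies since every initial tree has mass zero. Inductively, assume $\alpha^{(n-1)}$ has been defined on $\frakS(n-1)$ so that $M_p=\id$ modulo $T^{E_{\frakS(n-1)}}\Lambda_0$ at every branching value and every interior collision with $m(p)<E_{\frakS(n-1)}$.

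The extension to $\frakS(n)$ is carried out locally at each new scattering. Let $p$ be an ordered (non-cyclic) collision formed by $\cT_1,\ldots,\cT_l$ of types $(s_i,s_{i+1})$, scattering to a new tree $\cT_{\mathrm{new}}\in \frakS(n)\setminus\frakS(n-1)$ of type $(s_1,s_{l+1})$. A small loop around $p$ crosses each $\cT_i$ once, and the product of the corresponding unipotent matrices over $\Lambda_0$ gives the monodromy $M_p$ computed from $\frakS(n-1)$. By the inductive hypothesis and a commutator computation, the leading anomaly of this product sits in the $(s_1,s_{l+1})$-position at $T$-order $m(\cT_{\mathrm{new}})$, of the form $c\cdot T^{m(\cT_{\mathrm{new}})}$ with $c$ an explicit polynomial in the already-fixed $\alpha_{\cT_i}$'s and local-system data. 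Adjoining $\cT_{\mathrm{new}}$ adds the factor $\alpha_{\cT_{\mathrm{new}}}T^{m(\cT_{\mathrm{new}})}\cL^\sigma(\widetilde\gamma_{\cT_{\mathrm{new}}})$ to the same entry, so requiring $M_p=\id$ modulo the next cutoff uniquely determines $\alpha_{\cT_{\mathrm{new}}}$.

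Global consistency is automatic: since $m(\cT_{\mathrm{new}})>m(p)$, the newly adjoined tree affects other monodromies only at strictly higher $T$-order, so previously resolved vertices and all branching values remain trivial to the required order. Different new scatterings at the same stage are decoupled for the same reason. Lemma~\ref{lemma:keylemma} ensures $E_{\frakS(n)}\to\infty$, so iterating produces a wall-crossing datum on $\bigcup_n \frakS(n)=\frakS$ that is compatible to all $T$-orders. Uniqueness follows from exactly the same local rigidity: at each mass level, the coefficient of each new tree is forced by those of strictly smaller mass, and the initial data is forced to be the canonical one by hypothesis.

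The main obstacle is the \emph{type-isolation} claim invoked above: that the anomalous monodromy assembled from $\cT_1,\ldots,\cT_l$ at an ordered collision has its leading contribution exactly in the $(s_1,s_{l+1})$-entry, matching the type of the scattering tree, and that this leading term lives at $T$-order $m(\cT_{\mathrm{new}})$ rather than lower. This is a Kontsevich--Soibelman-style scattering computation; it requires carefully tracking the cyclic order in which a small loop meets the $\cT_i$'s, together with the fact that $E_{s_1,s_{l+1}}$ arises as the unique minimal iterated commutator of the $E_{s_i,s_{i+1}}$'s (any earlier-order off-diagonal anomaly having already been cancelled by the inductive hypothesis). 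Once this combinatorial step is in hand, the rest of the argument is essentially linear algebra in the pro-unipotent group over $\Lambda_0$.
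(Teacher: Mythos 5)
Your proposal takes essentially the same approach as the paper: induction along the mass filtration $\frakS(0)\subset\frakS(1)\subset\cdots$ from Section~\ref{section:proofofunobs}, determining the coefficient of each newly scattered tree so that the local monodromy around its collision vertex is killed to the next order, with $\lim E_{\frakS(n)}=\infty$ guaranteeing convergence. The one substantive difference is that where you argue existence and uniqueness of $\alpha_{\cT_{\mathrm{new}}}$ abstractly (linear algebra in the pro-unipotent group over $\Lambda_0$, given the ``type-isolation'' commutator claim), the paper writes down the closed formula directly: at an unscattered collision $p$ of $\cT_1,\ldots,\cT_k$ in cyclic order, it assigns to each ordered subcollision $\mathbf{i}=\{i_1,\ldots,i_s\}\in\frakC_p^s$ the factor $(-1)^{s+1}\prod_{j=1}^s\alpha_{\cT_{i_j}}$ on the scattering $\cT_{\mathbf{i}}$, and asserts (as a lemma left to direct check) that this yields $M(\frakS[p],p)=\id$ modulo $E_{\frakS[p]}$. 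So the paper bundles all new trees produced at a single collision into one simultaneous assignment over all scatterable index sets, rather than treating them one at a time as you do; this sidesteps the need to argue separately that the different new scatterings at the same $p$ decouple. The ``type-isolation'' step you flag as the main obstacle is precisely the content the paper also asserts without a written proof (``One can directly check''), so you have correctly identified where the real combinatorial work lives; your appeal to the Kontsevich--Soibelman scattering mechanism, with the inductive hypothesis cancelling all lower-order off-diagonal anomalies, is the right way to discharge it, and once done would also recover the explicit $(-1)^{s+1}\prod\alpha$ formula.
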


\subsection{Inductive construction}
It is enough to describe how we specify wall-crossing data when it is scattered. 
Suppose $p$ is an unscattered collision point. Suppose it consists of Stokes trees $\cT_1,...,\cT_k$ in a cyclic order. We denote the rooted edges of them by $l_1,...,l_k$, where the collision happens. We set
\begin{equation}
    \frakC_p^s:=\lc \lc i_1,..., i_s\rc\subset \lc 1,...,k\rc\relmid l_{i_1},...,l_{i_s}\text{ is an ordered collision}\rc
\end{equation}
for $s\geq 2$.For $\mathbf{i}\in \frakC_p^s$, we put the wall crossing factor $(-1)^{s+1}\prod_{j=1}^s\alpha_{\cT_{i_j}}$ on the scattering $\cT_{\mathbf{i}}$. Then we obtain a new Stokes subgraph $\frakS[p]$ (after perturbing $\theta$ if necessary).
One can directly check the following:
\begin{lemma}
    We have $M(\frakS[p], p)=\id$ modulo $E_{\frakS[p]}$.
\end{lemma}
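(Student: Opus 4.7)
The plan is to parameterize a small counterclockwise loop $\gamma$ around $p$, small enough that its only intersections with $\frakS[p]$ are one transverse crossing each with the rooted edges of the incoming trees $\cT_1,\dots,\cT_k$ and of the scattered trees $\cT_\mathbf{i}$ for $\mathbf{i}\in\frakC_p^s$, $s\geq 2$. Then $M(\frakS[p],p)$ is a cyclic product of the upper-triangular unipotent matrices $M((\cT,\alpha),\gamma)_c$ given in the definition, each of the form $I + \alpha T^m\cL^\sigma(\widetilde\gamma_*)\cdot \varepsilon_{ij}$ with $\varepsilon_{ij}$ the elementary matrix in the two sheets along the edge. Because we work modulo $T^{E_{\frakS[p]}}$ and every off-diagonal entry has strictly positive $T$-adic valuation, the expansion of this product has only finitely many nonzero terms.

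Next I would collect, slot by slot, the contributions to each off-diagonal entry $(s_a,s_b)$ of the expanded product. A subproduct of incoming matrices $A_{i_1}\cdots A_{i_r}$ lands in the $(s_a,s_b)$-slot precisely when $\mathbf{i}=\{i_1<\dots<i_r\}\in\frakC_p^r$ is an ordered subcollision joining sheet $s_a$ to sheet $s_b$, contributing $\prod_j \alpha_{\cT_{i_j}}\cdot T^{\sum_j m(\cT_{i_j},p)}$ times the composed local-system factor. A single outgoing edge $\cT_\mathbf{i}$ contributes once in the same slot, but the matrix appears with the opposite transverse sense because $\gamma$ crosses the outgoing rooted edge against the orientation used for the incoming edges. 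Cancellation in each $(s_a,s_b)$-slot then reduces to the finite algebraic identity that the wall-crossing factor $(-1)^{s+1}\prod_j \alpha_{\cT_{i_j}}$ assigned to $\cT_\mathbf{i}$ precisely compensates the net incoming contribution; this is a pentagon-type identity of unipotent upper-triangular matrices, and the overall sign $(-1)^{s+1}$ is exactly engineered for this cancellation.

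The local system and spin contributions $\cL^\sigma(\widetilde\gamma_*)$ match slot by slot: all Gauss lifts lie in a small disk around the smooth point $p$, and the ascending path of $\cT_\mathbf{i}$ at $p$ is the concatenation of the ascending paths of $\cT_{i_1},\dots,\cT_{i_s}$ by construction, so any mismatch would be the monodromy of $\cL$ around a contractible loop, hence trivial. Moreover the mass cutoff $E_{\frakS[p]}$ annihilates exactly those subcollisions not realized as scattered trees in $\frakS[p]$, so every term that survives modulo $T^{E_{\frakS[p]}}$ has a cancelling partner.

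The main obstacle is the combinatorial bookkeeping: pinning down the cyclic order in which $\gamma$ crosses the incoming and outgoing edges in order to determine the signs correctly, and verifying that the Gauss lifts for the scattered trees' rooted edges match those of the corresponding subcollisions. Both verifications are strictly local at the smooth point $p$; the strongly GMN hypothesis and the inductive unobstructedness of $\frakS(n)$ guarantee transversality and discreteness of the edges at $p$, so the expansion and the identity close as stated.
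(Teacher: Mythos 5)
The paper itself offers no proof of this lemma beyond the remark ``one can directly check,'' so I am comparing your proposal against the verification that the setup of the paper actually requires, not against an explicit argument in the text.

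Your overall strategy (take a small loop around $p$, expand the product of unipotent wall-crossing matrices, and match the off-diagonal slots against the scattered trees) is the right one. But there is a concrete geometric error in the setup that derails the sign bookkeeping: you assert that $\gamma$ has \emph{one} transverse crossing with the rooted edge of each incoming tree $\cT_i$. This is false. After scattering, the original trees $\cT_1,\dots,\cT_k$ are retained in $\frakS[p]$ unchanged (the paper forms $\frakS[E]$ as the \emph{union} of $\frakS$ and the scatterings), so $p$ remains an interior point of each rooted edge $l_i$, not a vertex of $\cT_i$. A small loop around $p$ therefore crosses each $l_i$ \emph{twice}, once on the ray heading toward the pole and once on the ray heading back toward the branching locus, while crossing each new rooted edge of $\cT_{\mathbf{i}}$ only once (those are genuine rays from $p$). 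The paper's own computation at a turning point, which counts six crossings for three emanating Stokes curves, confirms that crossings of a given curve are counted ray by ray.

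This changes the algebra essentially. Writing $A=\sum_i N_i$ for the sum of the incoming nilpotents and $B=\sum_{\mathbf{i}} N_{\mathbf{i}}$ for the outgoing ones, the monodromy is of the form $(I+A)(I-A-B)$ (the product over the incoming rays contributing $I+A$, the product over the outgoing rays of the $l_i$'s together with the new walls contributing $I-A-B$; the cross terms inside each of these two blocks vanish by the cyclic ordering of the rays). Setting this equal to the identity gives $(I+A)B=-A^2$, hence
\[
B=-(I+A)^{-1}A^{2}=\sum_{s\ge 2}(-1)^{s+1}A^{s},
\]
and the coefficient of the elementary matrix for an ordered subcollision $\mathbf{i}$ of size $s$ is exactly $(-1)^{s+1}\prod_{j}\alpha_{\cT_{i_j}}$. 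This is where the $(-1)^{s+1}$ comes from; it is not produced by the ``opposite transverse sense'' of the new wall alone, as your proposal suggests. In your accounting, the incoming side contributes $\prod\alpha$ with no compensating $A$-terms, and a single $M^{-1}$ from the new wall would give a total of $\bigl(1+(-1)^{s}\bigr)\prod\alpha$ in the $(s_a,s_b)$ slot, which does not vanish for even $s$. The missing ingredient is precisely the $A^2$ and $AB$ cross terms coming from the second crossing of each incoming rooted edge. Your observations about the Gauss lifts living in a contractible disk and the mass cutoff $E_{\frakS[p]}$ truncating the expansion are fine; the gap is the crossing count and the resulting recursion.
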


By repeating this procedure along the construction of the Stokes graph, we complete the proof of Theorem~\ref{thm:constructionofWallcrossing}.

\subsection{Construction of sheaf quantization}

Take an unobstructed angle $\theta$. Fix $E>0$. On each connected component $D$ of the complement of $\frakS$, we consider the projection
\begin{equation}
    \lc (x, t)\in D\times \bR_t\relmid t\geq -\int^x \lambda_i+c\rc\rightarrow D\cong L_i
\end{equation}
where $L_i$ is the connected component of $L\cap T^*D$. Along this morphism, we pull back $\cL$, and denote it by $\cL_{D,i}^c$.

We put the following sheaf
\begin{equation}
S_D^E:=\bigoplus_{i=1}^K\bigoplus_{c\in \bR}\cL_{D,i}^c.
\end{equation}
On each wall and each Stokes tree $\cT$ with the root edge of type $(ij)$ contributing on it, we consider the automorphism
\begin{equation}
    \id+E_{ij}T^{m(\cT)}a(\cT)\in \Aut(S_D^E|_{\text{neighborhood of wall}}).
\end{equation}
By the compatibility, we can glue up them to get a sheaf $S^E$. By taking the inverse limit, we obtain our desired object $S:=\lim_{+\infty\leftarrow E}S^E$.
\begin{theorem}
    The object $S$ is a sheaf quantization of the given brane structure $(L, \mathbf{b})$.
\end{theorem}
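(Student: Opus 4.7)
The plan is to show that each truncation $S^E$ is a sheaf quantization of $(L,\mathbf{b})$ modulo $T^E$ in $\Sh_{\tau>0}^{\bR^\delta}(C\times \bR_t)/(T^E)$, and then pass to the inverse limit. The argument decomposes into a local computation on each component of the complement of $\frakS$, a gluing step across walls, a consistency check at branching and scattering vertices, and finally the identification of the associated local system with $\cL$.

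First, I would analyze $S_D^E$ locally. Over a connected component $D$ of $C\bs \frakS$, the piece $\cL_{D,i}^c$ is the pullback of a rank one local system along a projection whose domain is the closed half-space $\{t\geq -\int^x\lambda_i+c\}$. A standard microsupport computation for such closed half-space sheaves shows that $\SS(\cL_{D,i}^c)\cap\{\tau>0\}$ equals the conormal to the graph, which under $\rho$ projects to the sheet $L_i\cap T^*D$. Summing over $i$ and $c$ gives $\musupp(S_D^E)=L\cap T^*D$, as needed.

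Next, across each wall carrying a Stokes tree $\cT$ whose root edge is of type $(ij)$, the gluing is prescribed by the unipotent automorphism $\id + E_{ij}T^{m(\cT)}a(\cT)$. Modulo $T^E$ only finitely many walls satisfy $m(\cT)<E$ (Claim~\ref{cl:finitecollision}), so the gluing reduces to a finite combinatorial problem. The off-diagonal $E_{ij}$-term moves the $i$-th summand into the $j$-th along a path whose Gauss lift is the rooted edge, which itself lies in $L$ by the preStokes condition $\Im(e^{-2\pi i\theta}(\lambda_i-\lambda_j))=0$; this ensures the glued sheaf in a tubular neighborhood of a wall still has microsupport inside $L$. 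The compatibility $M_p=\id$ modulo $T^{E_{\frakS[p]}}$ at every branching value and interior vertex (Theorem~\ref{thm:constructionofWallcrossing}) is precisely the cocycle condition ensuring that these wall-crossing isomorphisms patch to a globally defined $S^E$ with $\musupp(S^E)=L$ modulo $T^E$.

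The truncations form an inverse system: refining $E\mapsto E'>E$ only introduces additional wall-crossing data of mass $\geq E$, which acts as the identity modulo $T^E$. Hence $S=\lim_{+\infty\leftarrow E}S^E$ is well-defined and $\musupp(S)=L$. For the brane structure, the spin structure $\sigma$ enters the construction through $\cL^\sigma$ in the matrices $M((\cT,\alpha),\gamma)_c$, and the canonical initial wall-crossing data equals the identity as shown just after its definition, so on each component $D$ the restriction of $S$ to each sheet recovers $\cL|_{L_i}$. Pulling back through the double cover $\widetilde{S^*L}\to S^*L$ then returns exactly the pair $(\cL,\sigma)$, identifying $S$ as a sheaf quantization of the specified brane.

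The main obstacle I expect is the microsupport bookkeeping near walls and at collision vertices. One must rule out spurious microsupport contributions from the unipotent off-diagonal perturbations, for which the key input is that $E_{ij}T^{m(\cT)}a(\cT)$ introduces microsupport supported on the Gauss lift of the rooted edge of $\cT$, lying in $L$; and one needs the compatibility $M_p=\id$ to kill the otherwise extraneous monodromy at branching and interior vertices, which would both obstruct gluing and produce unwanted microsupport at the vertex itself. Making these estimates uniform in $E$ is precisely what allows the inverse limit to be formed and to retain microsupport $L$.
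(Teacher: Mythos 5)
The paper does not supply a proof of this theorem, so there is nothing to compare directly; instead I will assess the argument on its own terms. Your overall strategy — half-space microsupport computation on each component of $C\bs\frakS$, gluing by the unipotent wall-crossing automorphisms, invoking compatibility $M_p=\id$ at vertices, truncating modulo $T^E$ with Claim~\ref{cl:finitecollision} for finiteness, and then passing to the inverse limit — is the right one and is consistent with how the construction is set up in the paper (delegating to \cite{WKBkuw} in spirit).

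There is, however, a genuine gap: you conflate the cocycle/gluing condition $M_p=\id$ with the microsupport condition at vertices. These are related but not the same. At an interior collision vertex your argument is essentially fine, because the sheets of $L$ over such a point are disjoint and the half-space sheaves already carry the correct conormal. But at a branching value $v$, the Lagrangian $L$ has a vertical tangent over $C$, and $\musupp(S)$ there cannot be read off from the union of the sector-wise half-space microsupports; the three preStokes curves emanating from $v$ bound sectors, and one must check that the resulting glued object near $v$ is the standard Airy-type local model whose microsupport is precisely the branched piece of $L$ — this uses not only that $M_v=\id$ (so the sheaf extends across $v$) but that the wall-crossing matrices are unipotent with the specific sign coming from the spin structure (the $\begin{pmatrix}0&1\\-1&0\end{pmatrix}$ bookkeeping in the paper). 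Your sentence about the Gauss lift of the rooted edge ``lying in $L$'' does not address this; the wall itself is a preStokes curve, not a subset of $L$, and the correct reason that gluing across a smooth wall introduces no spurious microsupport is simply that $\phi$ is an isomorphism on the overlap — no appeal to the wall's Gauss lift is needed or valid there. I would also flag that ``$\musupp(S^E)=L$ modulo $T^E$'' is not well-formed (microsupport is a subset, not a module); what you actually mean is that $S^E$ is a sheaf quantization of $L$ after a suitable truncation of the ambient category, and that the inverse limit interchanges with $\musupp$, which should be said explicitly rather than implicitly.
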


\section{Non-abelianization}
\subsection{Novikov ring and equivariant Tamarkin category}
Let $\bR_{\geq 0}$ be the semigroup of real nonnegative numbers. We denote the polynomial ring of $\bR_{\geq 0}$ by $\bK[\bR_{\geq 0}]$. For $a\in \bR_{\geq 0}$, we denote the corresponding indeterminate by $T^a$. We denote the ideal generated by $T^a$ by $\la T^a\ra$. Then we set
\begin{equation}
    \Lambda_0:=\lim_{\substack{\longleftarrow \\ a\rightarrow +\infty}}\bK[\bR_{\geq 0}]/T^a\bK[\bR_{\geq 0}].
\end{equation}
This is called the universal Novikov ring. The unique maximal ideal of $\Lambda_0$ will be denoted by $\Lambda_0^+$. We denote the fraction field by $\Lambda$.

We now recall the main result of \cite{KuwNov}. We denote the derived category of $\Lambda_0$-modules over $C$ by $\Mod(\Lambda_{0C})$. 
\begin{theorem}[\cite{KuwNov}]\label{thm:Novsheaf}
    There exists an almost embedding
    \begin{equation}
        \Sh^{\bR_\delta}_{>0}(C\times\bR_t)\hookrightarrow \Mod(\Lambda_{0C}).
    \end{equation}
\end{theorem}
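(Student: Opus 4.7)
The plan is to build an explicit functor $F\colon \Sh^{\bR_\delta}_{>0}(C\times\bR_t)\to \Mod(\Lambda_{0C})$ by using the $\bR^\delta$-action to extract a filtered family of sheaves on $C$, and then promote this filtered family to a genuine $\Lambda_0$-module via the completion defining $\Lambda_0$. Concretely, for $\cE\in \Sh^{\bR^\delta}(C\times\bR_t)$ and each $c\in\bR_{\geq 0}$, I would define
\begin{equation}
F(\cE)_c := R\pi_*\bigl(\cE\otimes \bK_{\{t\geq -c\}}\bigr),
\end{equation}
where $\pi\colon C\times\bR_t\to C$ is the projection, and take the transition maps $F(\cE)_c\to F(\cE)_{c'}$ for $c\leq c'$ induced by the inclusion $\{t\geq -c\}\subset \{t\geq -c'\}$ combined with the equivariance isomorphism. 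These transition maps will be interpreted as multiplication by $T^{c'-c}\in \Lambda_0$, and passing to the completed limit gives a $\Lambda_{0C}$-module structure.

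My next step would be to check well-definedness on the Tamarkin quotient. Objects $\cE$ with $\SS(\cE)\subset\{\tau\leq 0\}$ are upward-constant in $t$ in a microlocal sense, so the inclusion $\{t\geq -c\}\subset\{t\geq -c'\}$ induces an isomorphism on $F(\cE)_\bullet$, meaning the resulting transition maps are isomorphisms; such modules become essentially the module where all $T^a$ act invertibly, and a small argument (using that the microsupport condition forces the relevant pushforwards to vanish after suitable localization) shows that they map to the zero object of $\Mod(\Lambda_{0C})$ up to $\Lambda$-torsion, which is precisely the content of the word ``almost''. Dually, given a $\Lambda_{0C}$-module with a good enough filtration, one builds a sheaf on $C\times\bR_t$ by gluing the strips $C\times [-c',-c)$ with transition data given by $T^{c'-c}$, producing a candidate quasi-inverse.

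To prove almost fully faithfulness, I would reduce to standard generators. The generators of the equivariant Tamarkin category are essentially sheaves of the form $\bK_{U\times [a,\infty)}$ for $U\subset C$ open and $a\in\bR$, and on the other side one has the obvious $\Lambda_0$-shifts $(\Lambda_{0U})_a$. Computing Hom on both sides on these generators reduces to comparing $R\Gamma$-computations on half-lines in $\bR_t$ with $\Lambda_0$-module Hom-sets; the positive microsupport condition guarantees that only the expected contributions survive, up to $\Lambda$-torsion coming from ``infinite'' Hom-terms at $t\to -\infty$. The main obstacle, and the reason for the qualifier ``almost'', will be bounding and controlling this torsion: the Tamarkin quotient is not quite equivalent to the Novikov-module category on the nose, and one needs either to restrict to a cofinal subcategory (bounded microsupport, or compactly generated) or to localize the target appropriately, so the bulk of the work is setting up the correct notion of ``almost embedding'' and verifying the Hom-computation modulo the chosen torsion ideal.
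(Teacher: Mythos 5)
The paper does not contain a proof of this statement: Theorem~\ref{thm:Novsheaf} is recalled from \cite{KuwNov}, and the text explicitly refers the reader there (``For a more precise result, we refer to \cite{KuwNov}.''), so there is no in-paper argument to compare your attempt against.

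Evaluating your sketch on its own terms: the overall mechanism — pushforward along $\bR_t$, using the $\bR^\delta$-equivariance to encode the Novikov parameter $T$, and reducing faithfulness to generators $\bK_{U\times[a,\infty)}$ — is the standard way such comparisons are set up, and is plausibly the shape of the construction in \cite{KuwNov}. However, your treatment of why the denominator subcategory $\Sh^{\bR^\delta}_{\tau\le 0}$ is killed has a genuine gap. You observe (correctly) that for $\SS(\cE)\subset\{\tau\le 0\}$ the propagation lemma makes the transition maps $F(\cE)_c\to F(\cE)_{c'}$ isomorphisms, so $T^a$ acts invertibly, and you then assert this is ``zero up to $\Lambda$-torsion'' and that this is the content of ``almost.'' That is not the right notion: an almost-zero $\Lambda_0$-module is one annihilated by $\Lambda_0^+$ (i.e.\ by every $T^a$ with $a>0$), whereas a $T$-divisible module is in general far from almost zero — these are opposite behaviors. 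The actual vanishing has to come from the $T$-adic completeness built into $\Lambda_0$: a complete, separated $\Lambda_0$-module on which every $T^a$ is invertible satisfies $M=\bigcap_a T^aM=0$, so it is genuinely zero, not merely almost zero. This means the completion step cannot be deferred to the end as ``passing to the completed limit''; it has to enter before you can even define the functor on the quotient category, and the role of ``almost'' is instead to absorb the failure of full faithfulness on Hom-groups (the $\Lambda_0^+$-torsion ambiguities there), not to kill the Tamarkin denominator. For the precise definitions and the actual argument you should read \cite{KuwNov}, since the present paper treats the result as a black box.
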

For a more precise result, we refer to \cite{KuwNov}.

\subsection{Non-abelianization over Novikov field}
It is known/expected that the spectral network transforms a local system on a spectral curve into a local system on $C\bs D$ e.g., \cite{GMNspec, ionita2021spectralnetworksnonabelianization}. This procedure is called {\em non-abelianization}. This is an important topic related to cluster coordinates on character varieties. In this section, we explain how to realize non-abelianization in our setup. 

\begin{theorem}
Let $L$ be a strongly GMN Lagrangian.
    Let $\theta$ be an unobstructed angle. Fix a spin structure $\sigma$ of $L$. Then we have a map
    \begin{equation}
       NA(\sigma): \Loc_1(L,\bK)\rightarrow \Loc_K(C\bs D, \Lambda).
    \end{equation}
    where $\Loc_i(X, A)$ is the set of $A$-local systems on $X$ of rank $i$ for a field $A$.
\end{theorem}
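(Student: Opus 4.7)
The plan is to assemble the rank-$K$ local system out of the $K$ sheets of $L$, using the compatible wall-crossing data provided by Theorem~\ref{thm:constructionofWallcrossing} as transition functions across the edges of $\frakS_{L,\theta}$.

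First I would turn the input datum into the geometric data of the previous section. Given $\cL\in\Loc_1(L,\bK)$, pairing $\cL$ with the fixed spin structure $\sigma$ yields a brane structure $\mathbf{b}=(\cL,\sigma)$ of $L$. Since $\theta$ is unobstructed, the Stokes graph $\frakS=\frakS_{L,\theta}$ consists only of open Stokes trees, and Theorem~\ref{thm:constructionofWallcrossing} produces the unique consistent wall-crossing data $\alpha$ extending the canonical initial data attached to $\mathbf{b}$.

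Next I would build the underlying rank-$K$ sheaf. On every connected component $U$ of $C^\circ_\frakS\setminus\frakS$, the projection $\pi_L^{-1}(U)\to U$ is an unbranched trivial $K$-sheeted covering with sheets $U_1,\dots,U_K$, so I set
\begin{equation}
    \cF|_U:=\bigoplus_{i=1}^K\cL|_{U_i}\otimes_\bK\Lambda_0,
\end{equation}
viewed as a rank-$K$ constant sheaf of $\Lambda_0$-modules. For a short transversal path $\gamma$ crossing a single Stokes tree $\cT$ of root type $(ij)$, the matrix $M((\cT,\alpha),\gamma)_c$—extended by the identity on the remaining $K-2$ sheets—supplies the gluing between adjacent chambers modulo $T^c$. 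Because the mass-filtration finiteness of Claim~\ref{cl:finitecollision} ensures that only finitely many trees contribute modulo $T^c$, the transition data is well-defined at every truncation level, and I would pass to the inverse limit as $c\to+\infty$ to obtain a $\Lambda_0$-local system $\cF$ on $C^\circ_\frakS$.

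Finally I would extend $\cF$ across the punctures consisting of branching values and interior collision vertices so as to obtain a local system on all of $C\setminus D$. The compatibility condition $M_p=\id$ delivered by Theorem~\ref{thm:constructionofWallcrossing} is precisely the triviality of the local monodromy around each such puncture, and the standard removable-singularity statement for local systems then provides a unique extension. Tensoring with the fraction field yields $NA(\sigma)(\cL)\in\Loc_K(C\setminus D,\Lambda)$, and the whole construction is functorial in $\cL$. The main obstacle is the control of the inverse limit: one must verify that the pro-system $\{\cF_c\}$ is a genuine local system rather than merely a pro-object, which rests on the effective mass bound $n\,w_{\min}\le E_{\frakS(n-1)}$ established in Section~\ref{section:proofofunobs}, guaranteeing that below any given mass only finitely many gluing matrices intervene.
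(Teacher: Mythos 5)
Your construction is correct, but it takes a noticeably more hands-on route than the paper. The paper's proof does not build the local system directly: it takes the sheaf quantization $S\in\Sh_{\tau>0}^{\bR^\delta}(C\times\bR_t)$ already constructed in the preceding section, pushes it through the almost embedding $\Sh_{\tau>0}^{\bR^\delta}(C\times\bR_t)\hookrightarrow\Mod(\Lambda_{0C})$ of Theorem~\ref{thm:Novsheaf} (the result imported from~\cite{KuwNov}), and then argues that, after applying $-\otimes_{\Lambda_0}\Lambda$, the image is locally constant of rank $K$ on $C\setminus D$. You instead bypass the Tamarkin category entirely and assemble a $\Lambda_0$-module sheaf directly on $C^\circ_\frakS$ out of the $K$ sheets, glue with the wall-crossing matrices truncated modulo $T^c$, pass to the inverse limit using the mass bound, and extend over branch points and interior vertices using $M_p=\id$. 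The two constructions should agree, and your version has the advantage of transparency: the rank-$K$ local constancy and the role of the compatibility condition are visible at each step, whereas in the paper this is delegated to ``inspecting the construction in~\cite{KuwNov}''. What the paper's route buys in exchange is the conceptual placement: the non-abelianization is shown to be literally the image of a Fukaya-categorical object (a sheaf quantization of $L$) under the Novikov comparison functor, which is what makes the conjectural interpretation in terms of $Fuk(T^*C)$ meaningful. One small point worth spelling out in your version: at a branching value $v$, the local monodromy of the naively glued sheaf is not trivial a priori --- it includes the sheet-swap and spin twist $\begin{pmatrix}0&1\\-1&0\end{pmatrix}$ --- so the removable-singularity step relies on the fact that $M_p$ in the paper's definition already incorporates these factors together with the three wall-crossing matrices, and it is \emph{that} product which equals $\id$ (as in the displayed identity in the subsection on initial wall-crossing data).
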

\begin{proof}
In the last section, we constructed a sheaf quantization as an object in $\Sh_{\tau> 0}^{\bR^\delta}(M\times \bR_t)$. By using the main result of Theorem~\ref{thm:Novsheaf}, we can construct a $\Lambda_0$-module sheaves on $C$. By inspecting the construction in \cite{KuwNov}, one can see that, by tensoring $\Lambda$ over $\Lambda_0$, we obtain a locally constant sheaf over $C\bs D$, which is the nonabelianization.
\end{proof}

\begin{remark}
    We conjecture that the resulting local system is covergent under the substitution $T=e^{-1/\hbar}$ for $0<\hbar<<1$ when $\bK=\bC$.
\end{remark}

\section{Exact WKB conjecture}~\label{WKBconjecture}
Exact WKB analysis has been expected for higher order differential equations. However, we cannot find much exact conjectures on exact WKB analysis in the literature. Here, as an application of our results, we give a version of an exact conjecture.

We suppose $\nabla$ be an $\hbar$-flat connection. We denote a WKB formal solution by $\Psi$ and its Borel--Laplace dual by $\cL\Psi$. 

\begin{conjecture}
Assume $\nabla$ is WKB-regular~\cite{WKBkuw} and the spectral curve is strongly GMN. Let $\theta$ be an unobstructed angle whose existence is assured by Theorem~\ref{thm:main}. 
\begin{enumerate}
    \item If $z\in C\bs D$ is not on the Stokes graph at $\theta$, $\cL\Psi$ is analytically continuable on $\bR_{>0}\cdot e^{2\pi i\theta}$. Moreover, $\cL\Psi$ is Laplace transformable in the direction $\theta$.
    \item Let $\cT_1,..,\cT_i,...$ be the set of Stokes trees at $\theta$ passing through $z$. Then the first sheet of the analytic continuation of $\cL\Psi$ is smooth on $\bR_{>0}\cdot e^{2\pi i\theta}\bs \lc m(\cT_i, \theta)\cdot e^{2\pi i\theta}\rc_i$.
\end{enumerate}
\end{conjecture}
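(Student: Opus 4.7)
The plan is to establish this conjecture by leveraging the inductive mass-filtered structure of the Stokes graph produced in the proof of Theorem~\ref{thm:main} together with the Novikov-enriched sheaf quantization $S$ built in Theorem~\ref{thm:constructionofWallcrossing}. Morally, $S$ is the Borel-resummed WKB data bundled as a sheaf, so the conjecture asserts that the algebraic/formal information carried by $S$ admits a genuine analytic realization, with ray singularities accounted for precisely by the Stokes trees through $z$.

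First I would set up the formal side. On any simply connected region $U\subset C\bs D$ disjoint from turning points and from the Stokes graph, the WKB ansatz attached to $\nabla$ produces $K$ formal series $\Psi^{(i)}$, one per sheet $i\in [K]$ of $\pi_L|_U$. Using WKB-regularity, I would first show that each $\Psi^{(i)}$ is Gevrey-$1$ in $\hbar$, so that its Borel transform $\cL\Psi^{(i)}$ converges on a disk in the Borel plane; the bounds should be inherited from the Hukuhara--Levelt--Turittin normal form of $\nabla$ at each pole in $D$, which is available under the sufficient condition formulated in the setup. At this stage assertion $(1)$ is reduced to analytic continuation of $\cL\Psi^{(i)}$ along the ray $\bR_{>0}\cdot e^{2\pi i\theta}$, for which a quantitative version of the classical \'Ecalle resurgence machinery is the right tool.

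To propagate $\cL\Psi^{(i)}$ and control its singularities, I would run induction on the mass filtration $\{\frakS(n)\}$ from the proof of Theorem~\ref{thm:main}. At the initial stage, the singularities of $\cL\Psi^{(i)}$ on the first sheet must match the masses $m(\cT)$ of initial Stokes curves through $z$; this is essentially the $K=2$ statement of Koike--Sch\"afke~\cite{KoikeShaefke} applied along the associated quadratic differential $\widetilde\Phi$, since locally along a preStokes edge only two sheets interact. The inductive step is the crucial one: at an ordered collision $p$ of mass less than $E_{n+1}$, a new Stokes tree $\cT_\mathbf{i}$ is created, and one must show that the analytic continuation acquires exactly one new Borel singularity at $m(\cT_\mathbf{i})\cdot e^{2\pi i\theta}$, with residue dictated by the wall-crossing factor $\alpha_{\cT_\mathbf{i}}$ assigned in Theorem~\ref{thm:constructionofWallcrossing}. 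The key formal inputs are the compatibility $M_p=\id$ around every interior vertex, which dualizes to a resurgence equation across $p$, and the Gromov compactness of Theorem~\ref{thm:gromovcompact}, which guarantees that only finitely many singularities accumulate in any bounded disk of the Borel plane.

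The main obstacle is this inductive step: translating the algebraic identity $M_p=\id$, which a priori lives over the Novikov ring $\Lambda_0$, into a genuinely analytic resurgence identity for $\cL\Psi^{(i)}$. A natural strategy is (a) to compare the full WKB system along a narrow slit neighborhood of each preStokes edge with the rank-$2$ WKB subsystem cut out by the corresponding pair of sheets of $\widetilde\Phi$, so that Koike--Sch\"afke resummation applies edge by edge, and (b) to glue these local Borel sums using the realization of $S$ in $\Mod(\Lambda_{0C})$ afforded by Theorem~\ref{thm:Novsheaf}, with the Novikov variable $T$ playing the role of $e^{-1/\hbar}$; the convergence remark following our non-abelianization theorem is exactly the quantitative statement one needs. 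I expect the sharpest difficulties to occur at branching values, where more than two sheets collide, and at collisions involving three or more Stokes trees: at such points the naive two-sheet reduction fails and a genuinely higher-order Stokes analysis, beyond iterated Koike--Sch\"afke, seems unavoidable.
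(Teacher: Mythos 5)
This statement is presented in the paper as a \emph{conjecture}, not a theorem: the whole point of Section~\ref{WKBconjecture} is to use the existence theorem for Stokes graphs to \emph{formulate} a precise exact WKB conjecture for higher-order differentials, which the paper does not prove. So there is no ``paper's proof'' for your proposal to match; a complete proof here would itself be a substantial new result.

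Read as a research sketch, your outline is a sensible roadmap, and you have correctly located where the difficulty lies. But it does not constitute a proof, and you say so yourself at the crucial juncture: the passage from the formal/Novikov-ring identity $M_p=\id$ to an actual analytic resurgence identity for $\cL\Psi$ is the entire content of the conjecture, and your steps (a) and (b) are stated as strategy rather than carried out. Two specific gaps are worth naming. First, the edge-by-edge reduction to a rank-$2$ Koike--Sch\"afke system along each preStokes edge is not automatic: the associated quadratic differential $\widetilde\Phi$ controls the \emph{geometry} of preStokes curves, but it does not by itself furnish a rank-$2$ $\hbar$-connection whose Borel sums agree with those of the original rank-$K$ connection $\nabla$ near that edge; establishing such a comparison (uniformly, with error estimates) is exactly the hard analytic input that is currently missing for $K\ge 3$. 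Second, even granting the edge-wise resummation, the gluing in step (b) would require that the sheaf quantization $S$ and the analytically resummed solutions define \emph{the same} object after the substitution $T=e^{-1/\hbar}$; but the convergence of that substitution is itself only conjectural in the paper (see the remark after the non-abelianization theorem), so you would be assuming a statement of the same order of difficulty as the one you are trying to prove. In short: the proposal correctly reduces the conjecture to the analytic realization of the Novikov-level wall-crossing identities, but it does not supply that realization, and the places you flag as ``difficult'' (branching values, collisions of $\ge 3$ trees, genuinely higher-order Stokes phenomena) are precisely the open part of the problem.
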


\footnotesize
\bibliographystyle{alpha}
\bibliography{bibs.bib}

\noindent
Department of Mathematics, Graduate School of Science, Kyoto University, tatsuki.kuwagaki.a.gmail.com

\end{document}